\newcommand{\R}{\mathbb{R}}
\newcommand{\Z}{\mathbb{Z}}
\newcommand{\N}{\mathbb{N}}
\renewcommand{\P}{\mathbf{P}}
\newcommand{\B}{\mathcal{B}}
\newcommand{\ee}{\mathrm{e}}
\newcommand{\dx}{\mathrm{d}x}
\newcommand{\dy}{\mathrm{d}y}
\newcommand{\dd}{\mathrm{d}}
\newcommand{\Cv}{C_{\infty}}
\newcommand{\Cb}{C_{\delta}}
\newcommand{\Ca}{C_{a}}
\newcommand{\Crho}{C_{\rho}}
\newcommand{\ufl}{u_\mathrm{flat}}
\newcommand{\gfl}{g_\mathrm{flat}}
\newcommand{\usm}{u_\mathrm{smooth}}
\newcommand{\gsm}{g_\mathrm{smooth}}
\newcommand{\ustep}{u_\mathrm{step}}
\newcommand{\pv}{\mathrm{P.V.}}
\newcommand{\Laplace}{\Delta}
\newcommand{\halfLap}{-(-\Laplace)^{1/2}}
\newcommand{\A}{\operatorname{A}}
\newcommand{\BB}{\operatorname{B}}
\newcommand{\abs}[1]{\left| #1 \right|}
\newcommand{\norm}[1]{\left|\left| #1 \right|\right|}
\newcommand{\scalar}[1]{\left\langle  #1 \right\rangle}
\newcommand{\supp}{\operatorname{supp}}
\theoremstyle{plain}
\newtheorem{theorem}{Theorem}[section]
\newtheorem{lemma}[theorem]{Lemma}
\newtheorem{proposition}[theorem]{Proposition}
\newtheorem{corollary}[theorem]{Corollary}
\theoremstyle{definition}
\newtheorem{definition}[theorem]{Definition}
\newtheorem{assumption}[theorem]{Assumption}
\theoremstyle{remark}
\newtheorem{remark}[theorem]{Remark}
\title[Pinning in random elastic media]{Pinning of interfaces in a random elastic medium
and logarithmic lattice embeddings in percolation}
\date{\today}
\author{Patrick W. Dondl}
\address[P.~W.~Dondl]{Department of Mathematical Sciences\\
Durham University\\
Science Laboratories\\
South Rd\\
Durham DH1 3LE\\
United Kingdom}
\email{patrick.dondl@durham.ac.uk}
\urladdr{http://www.dondl.com/}
\author{Michael Scheutzow}
\address[M.~Scheutzow]{Fakult\"at II, Institut f\"ur Mathematik, Sekr.\ MA 7--5,
Technische Universit\"at Berlin, Strasse des 17.\ Juni 136,
D-10623 Berlin, Germany} \email{ms@math.tu-berlin.de}
\urladdr{http://www.math.tu-berlin.de/$\sim$scheutzow/}
\author{Sebastian Throm}
\address[S.~Throm]{Institut f\"ur Angewandte Mathematik\\
Im Neuenheimer Feld 294\\
D-69120 Heidelberg, Germany} \email{Throm@stud.uni-heidelberg.de}
\keywords{Phase boundaries, percolation, lattice embeddings in percolation, interfaces, elastic media, random media, pinning, fractional diffusion equations, supersolutions}
\subjclass[2010]{35Q74, 35R11, 60K35}
\begin{document}

\begin{abstract} 
For a model of a driven interface in an elastic medium with random obstacles we prove existence of a stationary positive supersolution at non-vanishing driving force. This shows the emergence of a rate independent hysteresis through the interaction of the interface with the obstacles, despite a linear (force=velocity) microscopic kinetic relation. We also prove a percolation result, namely the possibility to embed the graph of an only logarithmically growing  function in a next-nearest neighbor site-percolation cluster at a non-trivial percolation threshold.
\end{abstract}
\maketitle
\section{Introduction and the main result}

In this article, we consider a model for the propagation of one-dimensional fronts immersed in an elastic medium subject to an external driving force and randomly distributed obstacles. The goal is to understand the overall macroscopic behavior of such fronts and its dependence on the external forcing. Here we prove existence of stationary solutions at positive driving force and thus the emergence of hysteresis.

In order to precisely state our model, let $(\Omega, \B, \P)$ be a probability space, $\omega \in \Omega$. The random front at time $t$ is given as the graph $(x,u(x,t,\omega))$ of a function $u\colon \R \times (0,\infty) \times \Omega \to \R$ solving the semilinear fractional diffusion problem
\begin{align}
u_t(x,t,\omega) &= \halfLap u(x,t,\omega) - f(x,u(x,t,\omega),\omega) + F  \label{eq:evolution}\\
u(x,0,\omega) &=0.  \nonumber
\end{align}
The function $f(x,y,\omega) \ge 0$ is assumed to be locally smooth in $x$ and $y$ for any $\omega$ and of the form of localized obstacles of identical shape and random positions with uniform density, i.e., the obstacle centers are given by a 2-dimensional Poisson process. See Assumption~\ref{ass:obstacles} for a precise statement. The constant term $F$ is an external loading, the fractional Laplacian models the interaction of the front with the elastic medium in which it is immersed.

\begin{figure}
\resizebox{0.8\textwidth}{!}{\input{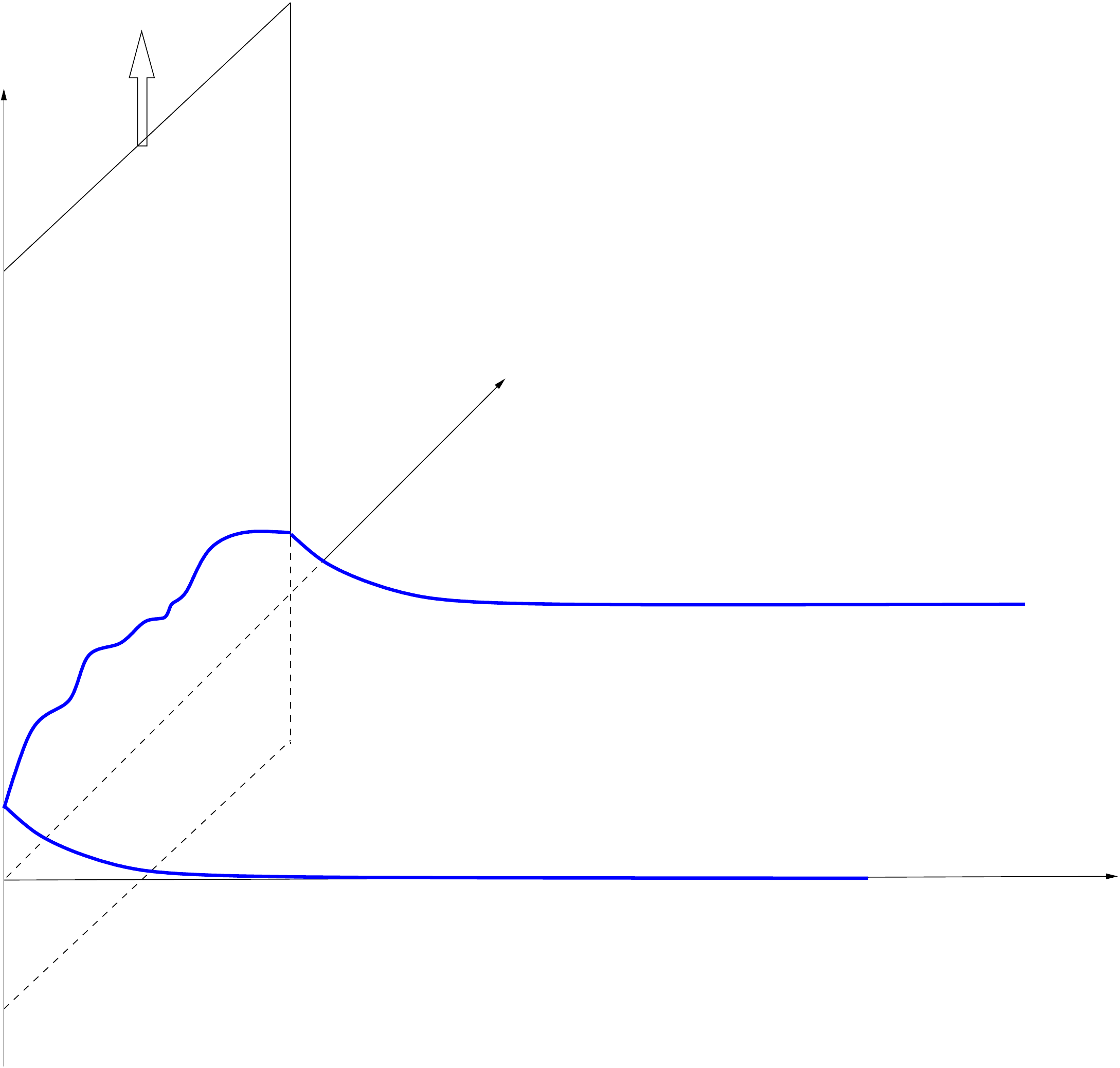_t}}
\caption{Pulling sandpaper out of a glass of water.} \label{fig:sandpaper}
\end{figure}
Evolution problems of this kind arise in a large number of physical systems. A particularly simple example is that of ``pulling sandpaper out of a glass of water''. As illustrated in Figure~\ref{fig:sandpaper}, we model the evolution of the wetting line of the water surface on a rough plate as the plate gets pulled out of the water. Equation~\eqref{eq:evolution} can here formally be derived as follows, the derivation in other physical systems (e.g., crack fronts) is similar. We assume the motion of the wetting line $u\colon \R \to \R$ to be slow compared to the relaxation time of the water surface $U\colon \R \times \R^+ \to \R $. The system contains an energy term stemming from the water's surface energy, which is given (after removing the constant term from a completely flat surface) as
$$
\int_{\R \times \R^+} -1+\sqrt{ \abs{\nabla U}^2 +1 } \,\dx
$$
Linearizing this energy around a nearly flat state and cancelling the constant term we approximate this as
$$
\int_{\R \times \R^+} \frac{1}{2} \abs{\nabla{U}}^2 \,\dx.
$$
It is well known that the infimum of this energy subject to the condition that $U=u$ on the boundary of the domain is given by $\frac{1}{2}[u]^2_{H^{1/2}}$, i.e., the $H^{1/2}$-norm squared of the function setting the boundary condition. The variation of the $H^{1/2}$-norm squared yields the term containing the square root of the Laplacian in equation~\eqref{eq:evolution}. The constant term $F$ models the constant force with which the rough surface is pulled out of the water. The roughness of the surface itself acts as an obstacle to the evolution of the wetting line---it locally requires an additional amount of force to overcome a grain in the sandpaper. This is modelled by the heterogeneous force term $f$, yielding equation~\eqref{eq:evolution} as the viscous flow with respect to the involved force terms. An experimental example of this kind of system can be found in~\cite{Moulinet:2002jp}. They use a similar model, which is also proposed by~\cite{Ertas:1994, Joanny:552}. The non-local term in these articles is a mean-field version of our term.

Another important application in which a model of the above kind arises is that of a crack front propagating in a rough medium. Experiments and some modeling can be found in the work of Schmittbuhl et.~al.~\cite{Schmittbuhl:2003vc}. The derivation of the stress intensity factor for a non-flat crack front (resulting in the fractional Laplacian) was first given by Gao and Rice~\cite{Gao:1989}.  For simulations using the model and for more experimental references see for example~\cite{Tanguy:2004bs}.  Nonlocal operators that model the interaction with elastic media also arise in models for dislocations~\cite{Briani:2009uw, Forcadel:2009ec}.

In this article, we consider a specific form of the function $f$ which is that of localized smooth obstacles.\begin{assumption}[The random field]
\label{ass:obstacles}
Fix $\phi \colon \R^2 \to [0,1] \in C_c^\infty(\R^2)$ with 
\begin{enumerate}[\upshape (i)]
\item $\phi(x,y) = 0$ if $\norm{(x,y)} > r_1$,
\item $\phi(x,y) = 1$ if $\norm{(x,y)}_\mathrm{max} \le r_0$,
\end{enumerate}
for $r_1 > \sqrt{2}r_0 > 0$. By $\norm{\cdot}$ we denote here the Euclidean norm in $\R^2$, by $\norm{\cdot}_\mathrm{max}$ the maximum-norm in $\R^2$.
We consider $f$ to be of the form
$$
f(x,y,\omega) = \sum_{k\in K} f_k(\omega) \phi(x-x_k(\omega), y-y_k(\omega)).
$$
The random coefficients for the strength of obstacles $f_k$ are iid strictly positive random variables. The random distribution of obstacle sites $\{(x_k,y_k)\}_{k\in K}$ is a 2-dimensional Poisson process on $\R\times [r_1, \infty)$ with intensity $\lambda$.
\end{assumption}
\begin{remark}
Given Assumption~\ref{ass:obstacles} it is clear that~\eqref{eq:evolution} admits a unique classical solution.
\end{remark}

Under these conditions we can state the main theorem of this article.
\begin{theorem}[Pinning of interfaces]\label{thm:main}
Assume the function $f$ is chosen according to Assumption~\ref{ass:obstacles} and let $\A := -(-\Laplace)^s$ with $s \in [1/2,1)$. Then there exist a deterministic  $F^*>0$ and a $C^\infty$ random function $u\colon \R \times \Omega \to [0,\infty)$ such that
$$
0 \ge (\A u(\omega))(x) - f(x,u(x,\omega),\omega) + F^*
$$
for almost all $\omega\in\Omega$ and all $x\in \R$.
\end{theorem}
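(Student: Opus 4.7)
The overall strategy is to construct the stationary supersolution $u$ explicitly by gluing together elementary step-like supersolutions, each anchored at a single obstacle chosen from an infinite percolation cluster whose vertical positions grow only logarithmically in the horizontal coordinate. The two ingredients—one probabilistic, one analytic—correspond to the two halves of the paper's title.

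First I would coarse-grain the obstacle field. Partition $\R\times[r_1,\infty)$ into boxes of side length $L$ (to be chosen large), and declare a box to be \emph{open} if it contains at least one Poisson point $(x_k,y_k)$ with strength $f_k\ge\bar f$ for some fixed threshold $\bar f>0$. Since the obstacle centres form a planar Poisson process and the strengths are i.i.d.\ strictly positive, the open boxes form an i.i.d.\ Bernoulli site percolation on $\Z^2$ whose open probability can be driven arbitrarily close to $1$ by taking $L$ large. Thus we sit deep in the supercritical regime of next-nearest-neighbour site percolation, and I can invoke the paper's second main theorem: almost surely there is a bi-infinite sequence of open boxes $(i_n,j_n)$, $n\in\Z$, with $i_{n+1}=i_n+1$ and $|j_n|\le C\log(|n|+2)$. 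Selecting one designated obstacle from each such box gives a sequence of pinning sites $(x_n,y_n)$ with horizontal spacing of order $L$ and $y_n=O(L\log(|n|+2))$.

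Next I would build $u$ as a smooth, nonnegative staircase resting on this sequence: near each $x_n$ the graph of $u$ is pinned inside the support of the $n$-th obstacle (so $|u(x)-y_n|\le r_0$ for $|x-x_n|\le r_0$), while between consecutive pins $u$ transitions monotonically from $y_n$ to $y_{n\pm 1}$. In practice I would assemble $u$ as a pointwise maximum of elementary building blocks of the form $\ustep$, $\ufl$, $\usm$ centred on each pin, so that $u$ inherits good concavity between pins. Because the vertical envelope grows only logarithmically and the horizontal spacing is bounded, $(-\Laplace)^s u$ is uniformly bounded. The supersolution inequality is then verified in two regimes. Where $(x,u(x))$ lies inside some obstacle support, the obstacle term $f(x,u(x),\omega)\ge\bar f$ absorbs $\A u+F^*$. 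Between pins, the step-like geometry of $u$ forces $\A u$ to be bounded above by a negative constant, so $\A u+F^*\le 0$ whenever $F^*$ is small enough, and $f\ge 0$ only helps.

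The main technical obstacle is this pointwise verification between pins: one must quantify how negative $\A u$ is at intermediate $x$, uniformly over all possible configurations of neighbouring pins up to the logarithmic envelope. This is where the hypothesis $s\ge 1/2$ enters: it provides enough non-locality for each "step" of $u$ to contribute a definite negative amount to $\A u$ at all nearby points, whereas for $s<1/2$ the local influence of a step decays too fast and one could not dominate $F^*$ uniformly. The final proof is a coordinated choice of the parameters $L$, $\bar f$ and $F^*$: $L$ large enough that the site-percolation is supercritical and the logarithmic-embedding theorem applies; $\bar f$ large enough to absorb the (bounded) positive values of $\A u+F^*$ near pins; and $F^*$ small enough so that the off-pin negativity of $\A u$ dominates it.
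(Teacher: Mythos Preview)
Your probabilistic half is right: coarse-grain into boxes, declare a box open if it contains a sufficiently strong obstacle, and invoke the flat-percolation theorem to obtain a random map $\Lambda$ with sublinear growth. The paper does exactly this (Proposition~\ref{prop:prop}), in fact using $H(k)=\lfloor k^\alpha\rfloor$ rather than the sharper logarithmic envelope.

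The analytic half, however, has a genuine gap. You propose to build $u$ as a single smooth staircase (a pointwise maximum of step-like profiles) and assert that ``between pins, the step-like geometry of $u$ forces $\A u$ to be bounded above by a negative constant''. This is not true in general: for a monotone step, $\A u = -(-\Delta)^s u$ changes sign across the step, and nothing in your description prevents it from being positive on one side. The required uniform negativity away from obstacles does not come for free from any staircase geometry; it has to be \emph{manufactured}.

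The paper does this by an additive decomposition $u=\usm+\ustep$, not a pointwise extremum of staircases. The function $\ustep$ is the smooth staircase matching the heights $\Lambda(k)$; its role is only to place the graph at the right vertical level, and its contribution $\A\ustep$ is treated as a small \emph{error}, bounded via the sub-linear growth of $\Lambda$ (Proposition~\ref{Stufenfunktion}). The mechanism that actually produces the negative sign between obstacles lives in $\usm$: one first solves the \emph{periodic} problem $\A v=g$ for a carefully chosen mean-zero $g$ that equals $F_2>0$ on a narrow interval and $-F_1<0$ elsewhere (Definitions~\ref{def:g}--\ref{def:v}), then sets $\ufl=\min_{i}v(\cdot-x_i)$ and mollifies. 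The substantive analytic work (Lemmas~\ref{lem:distance}--\ref{lem:Aufl_smooth}) is to show that passing from the single periodic $v$ to the pointwise minimum over shifted copies does not spoil the inequality $\A\ufl\le g$; this requires a delicate accounting of where $\ufl$ sits above and below each $v_i$ and uses the monotonicity of $v$ on half-periods in an essential way. Your outline contains no analogue of this step.

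Finally, your explanation of where $s\ge 1/2$ enters is inverted. It is not that larger $s$ makes distant steps contribute more negatively; rather, $s\ge 1/2$ is what keeps $\|v\|_\infty$ small enough (Lemma~\ref{lem:stand_sol}(iii)--(iv)) that the graph of $\usm+\ustep$ remains inside the obstacle support near each $x_i$. For $s=1/2$ the bound on $\|v\|_\infty$ acquires a logarithmic factor, which is why the paper needs a separate parameter-tuning lemma in that borderline case.
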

\begin{remark}
By the comparison principle, of course any random field that can be bounded from below for a.e.~$\omega$ by a field of the type of Assumption~\ref{ass:obstacles} yields the same pinning result.
\end{remark}

This theorem states that there is a non-trivial pinning threshold in our model, since by the comparison principle any solution of the fractional diffusion problem~\eqref{eq:evolution} with $F\le F^*$ and zero initial condition must remain below the non-negative supersolution $u$. 
Note that Assumption~\ref{ass:obstacles} ensures that an identically zero function is a stationary subsolution to the evolution problem~\eqref{eq:evolution} for any $F\ge 0$. Thus, for $0\le F \le F^*$, the interface becomes trapped and reaches---at least asymptotically---a stationary state.

The article is organized as follows. In Section~\ref{sec:flat} we show the existence of a non-trivial  threshold for the existence of infinite percolation clusters which contain the graph of a function that only grows logarithmically. This is a generalization of Lipschitz percolation~\cite{Dirr:2010}, the proof for our result is inspired by~\cite{Grimmett:2010vc}. In Section~\ref{sec:construction}, using the percolation result, the supersolution is constructed. In contrast to~\cite{Dirr:2011}, due to the non-local nature of the problem, a simple piecewise construction is no longer sufficient. Finally, in Section~\ref{sec:conclusions} we present some conclusions and open problems.

\section{Flat Percolation Clusters}
\label{sec:flat}

In this section, let $\|.\|$ denote the $l_1-$norm on $\R^n$ and denote the $i^{\mathrm{th}}$ unit vector in $\R^m$ by $e_i$ ($i \le m$).
\begin{theorem} 
\label{thm:perc}
Consider {\em site percolation} on $\Z^{n+1}$ with $n \ge 1$: for given $p \in [0,1]$, each $z \in \Z^{n+1}$ is 
{\em open} with probability $p$ and {\em closed} otherwise, with different sites receiving independent states. 
For each nondecreasing function $H:\N_0 \to \N_0$ satisfying 
\begin{itemize}
\item[i)] $H(0)=0$
\item[ii)] $H(1)\ge 1$
\item[iii)] $\liminf_{k \to \infty} \frac{H(k)}{\log k} >0$,
\end{itemize}
there exists some $p_H=p_H(n) \in (0,1)$ such for each $p \in (p_H,1)$ and almost every realization of the site percolation model 
with parameter $p$, there exists a (random) function $\Lambda:\Z^n \to \N$ which satisfies
\begin{itemize}
\item[a)] $|\Lambda(x)-\Lambda(y)| \le H(\|x-y\|)$ for all $x,y \in \Z^n$
\item[b)] $(x,\Lambda(x))$ is open for every $x \in \Z^n$.
\end{itemize} 
\end{theorem}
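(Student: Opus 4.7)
I would follow the Peierls / contour-counting template for Lipschitz percolation \cite{Dirr:2010,Grimmett:2010vc}, adapting it to the generalized slope modulus $H$. The first step is to set up the minimal admissible function. Call $\lambda\colon\Z^n\to\N$ an \emph{$H$-admissible section} for a realization $\omega$ if $(x,\lambda(x))$ is $\omega$-open for every $x$ and $|\lambda(x)-\lambda(y)| \le H(\|x-y\|)$ for all $x,y\in\Z^n$. The class of $H$-admissible sections is closed under pointwise minima, since the elementary inequality $|\min(a,c)-\min(b,d)| \le \max(|a-b|,|c-d|)$ preserves the $H$-slope bound and a pointwise minimum of open heights is an open height. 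Consequently, provided the class is nonempty with pointwise finite infimum, the function $\Lambda_\omega(x) := \inf\{\lambda(x) : \lambda \text{ is $H$-admissible}\}$ is itself $H$-admissible and is the desired object. The problem reduces to showing $\Lambda_\omega(0) < \infty$ almost surely for $p$ close enough to $1$; translation invariance and ergodicity then give finiteness everywhere.

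To control $\P(\Lambda_\omega(0) > M)$, I would construct $\Lambda$ by a greedy sweep from below: initialize $\lambda_0(x) = \min\{h \ge 1 : (x,h) \text{ open}\}$, which is geometrically distributed and a.s.\ finite, then iteratively raise $\lambda(x)$ by one whenever $(x,\lambda(x))$ is closed or the slope constraint to some other site is violated. The limit is $\Lambda$. If $\Lambda_\omega(0) > M$, one backtracks causally through the sweep: each raising event is caused either directly by a closed site at $(x,\lambda(x))$ or by an earlier raising at a neighbor $y$ whose $H$-slope constraint was active. Following the chain back yields a finite, connected \emph{blocker} $B$ of closed sites anchored at the origin, whose cardinality is at least $M$, since $\Lambda_\omega(0)$ has been raised through $M$ unit levels.

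The Peierls estimate then bounds $\P(\Lambda_\omega(0) > M)$ by summing $(1-p)^{|B|}$ over all combinatorially possible blockers $B$ of cardinality $N \ge M$ anchored at the origin. The count of such blockers is at most $C^N$ for some $C = C(n,H)$: each backtracking step moves from a closed site at vertical position $h$ to a nearby closed site, and a vertical displacement $\Delta h$ corresponds to horizontal displacement at most $r$ with $H(r) \ge \Delta h$, i.e.\ $r \le H^{-1}(\Delta h)$. The hypothesis $\liminf_{k\to\infty}H(k)/\log k > 0$ is precisely what forces $H^{-1}(\Delta h) \le e^{c\Delta h}$ for large $\Delta h$, so the per-step branching factor, summed over $\Delta h$, is a finite constant. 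This gives
$$\P\bigl(\Lambda_\omega(0) > M\bigr) \le \sum_{N\ge M} C^N (1-p)^N,$$
which for $p > 1 - 1/C =: p_H$ is a convergent geometric series tending to $0$ as $M\to\infty$. Hence $\Lambda_\omega(0) < \infty$ a.s., completing the proof.

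The principal obstacle is the precise combinatorial setup and enumeration of blockers. In the classical Lipschitz case $H(k)=k$, the blocker is essentially a $1$-Lipschitz closed surface and the enumeration reduces to a standard contour count; under only a logarithmic lower bound on $H$, the blocker can extend far horizontally between consecutive vertical levels, and the appropriate ``$H$-adjacency'' relation on $\Z^{n+1}$ must be chosen so that (i) the causal tracing of the greedy sweep genuinely produces an $H$-connected set and (ii) the resulting entropy of potential blockers stays exponential in $N$ rather than faster. The conditions $H(0) = 0$ and $H(1) \ge 1$ play the mild technical role of ensuring that the slope bound at unit horizontal separation is nontrivial and compatible with the minimum construction.
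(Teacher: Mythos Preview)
Your overall architecture coincides with the paper's: what you call the greedy-sweep limit is exactly the function the paper defines via \emph{admissible paths} (paths whose steps are either $+e_{n+1}$ landing on a closed site, or a ``down-jump'' $(z,-H(\|z\|))$ with $z\neq 0$), and your causal backtracking chains are precisely those paths read in reverse. The reductions in your first paragraph (closure under minima, reduction to finiteness of $\Lambda(0)$) are fine.

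The gap is in the enumeration. You assert that ``the per-step branching factor, summed over $\Delta h$, is a finite constant'' because $H^{-1}(\Delta h)\le e^{c\Delta h}$. But that sum is
\[
\sum_{\Delta h\ge 1}\bigl(H^{-1}(\Delta h)\bigr)^n \;\lesssim\; \sum_{\Delta h\ge 1} e^{cn\,\Delta h},
\]
which diverges, so no bounded-degree $H$-adjacency exists and the count $C^N$ for blockers of cardinality $N$ cannot be obtained this way. You flag exactly this obstacle in your last paragraph but do not resolve it; as stated, the Peierls bound $\sum_{N\ge M} C^N(1-p)^N$ is not justified.

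The paper's fix is a different bookkeeping that you are missing. Rather than summing over the blocker cardinality, one fixes the target height $h$ and the \emph{total down-displacement} $D$ of the path. Since the path must climb from level $0$ to level $h$, the number of up-steps---and hence the number of closed sites visited---is exactly $D+h$, giving weight $q^{D+h}$. The entropy of the down-jumps is controlled by partitioning $D=\sum_i i\,k_i$ and using $R(i):=\sup\{k:H(k)=i\}\le Ce^{\gamma i}$; the product $\prod_i R(i)^{n k_i}$ then telescopes to at most $(KC^n)^{\sum k_i}e^{\gamma n D}$, and a multinomial count yields an overall bound of the form $(2q)^h(\beta q)^D$ with $\beta=\beta(n,H)$ finite. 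Summing over $D\ge H(N)$ and over the starting point at $l_1$-distance $N$ gives convergence for $q<1/\beta$. The point is that every unit of horizontal entropy is paid for by a unit of down-displacement, which in turn forces an extra up-step through a closed site; counting by $N=\#\{\text{closed sites}\}$ alone loses this coupling.
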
 
\begin{proof}
It suffices to prove the theorem in case $H(k+1) \le H(k) +1$ for all $k \in \N_0$ (which implies $H(1)=1$). 
Further, we can and will assume without loss of generality that for all positive integers $k_1,...,k_m$, we have $H(\sum_{j}k_j)\le \sum_j H(k_j)$. 

For $j \in \N$ define
$$
R(j):=\sup\{k \in \N: H(k)=j\}.
$$
Now we explain what we mean by an admissible path. Let $x,y \in \Z^{n+1}$. A {\em blocking-path} from $x$ to $y$ is a finite 
sequence of distinct sites $x=x_0,x_1,...,x_k=y$ in $\Z^{n+1}$ such that for each $i=1,...,k$ the difference $x_i-x_{i-1}$ 
takes either the value $e_{n+1}$ or  $(z,-H(\|z\|))$ for some $z \in \Z^n\backslash \{0\}$. 
A {\em blocking-path} is called an {\em admissible path} if in addition for each $i=1,...,k$ we have that
if $x_i-x_{i-1}=e_{n+1}$ then $x_i$ is closed.
For $z \in \Z^n$, define
\begin{gather*}
\Lambda(z):=1+\sup\{h \in \N_0: \\ \mbox{ there exist $x \in \Z^n$ and an admissible path from $(x,0)$ to $(z,h)$}\}.
\end{gather*}
If this supremum is finite for some $z$, then $(z,\Lambda(z))$ is clearly open. In this case, $\Lambda(x)$ is finite for all $x \in \Z^n$ and $\Lambda$ satisfies a) in the 
Theorem. All that remains to be shown is that $\Lambda(0)$ is finite. 

By assumption, there exist $C>0$ and $\gamma >0$ such that $R(i) \le C \ee^{\gamma i}$ for all $i \in \N$. 
Observe, that there exists some $K$ such that the number of sites in $\Z^n$ with $l_1-$norm at most $k$ is bounded by $K k^n$ for all 
$k \in \N$.
Fix $N \in \N_0$, $h \in \N$, and $q:=1-p \in (0,1)$. For  $x \in \Z^n$ with $\|x\|=N$ we estimate the expected number of admissible paths from $(x,0)$ 
to $(0,h)$ as follows:

for a given such admissible path, let $k_i$ be the number of steps of the path containing a down-jump of size $i \in \N$. Then 
the expected number of such admissible paths which contain exactly $D \in \N_0$ down-steps (in the sense that 
$\sum_{i=1}^k (x_i-x_{i-1})^-=D$) and therefore $D+h$ up-steps is at most
\begin{align*}
\sum &\Big( {{k_1+...+k_D+D+h} \choose {k_1,...,k_D,D+h}} q^{D+h}\prod_{i=1}^D(K\, R(i)^n)^{k_i}\Big)\\
&\le q^{D+h} \ee^{\gamma nD} \sum\Big( {{k_1+...+k_D+D+h} \choose {k_1,...,k_D,D+h}}  (KC^n)^{k_1+...+k_D}\Big)\\
&\le q^{D+h} \ee^{\gamma nD} \sum_{m=0}^D \Big((KC^n)^m \sum {{m+D+h} \choose {k_1,...,k_D,D+h}} \Big)\\
&= q^{D+h} \ee^{\gamma nD} \sum_{m=0}^D \Big((KC^n)^m {{m+D+h}\choose {D+h}}\sum {m \choose {k_1,...,k_D}} \Big)\\
&\le q^{D+h} \ee^{\gamma nD} \sum_{m=0}^D \Big((KC^n)^m 2^{m+D+h} 2^D {{m+D-1}\choose {m}}\Big)\\
&\le q^{D+h} \ee^{\gamma nD} 2^{2D+h} ((2KC^n)\vee 1)^D \sum_{m=0}^D {{m+D-1}\choose {m}}\\
&= q^{D+h} \ee^{\gamma nD} 2^{2D+h} ((2KC^n)\vee 1)^D {{2D}\choose {D}}\\
&\le q^{D+h} \ee^{\gamma nD} 2^{2D+h} ((2KC^n)\vee 1)^D 2^{2D},
\end{align*} 
where the first two sums are extended over all $k_1,...,k_D \in \N_0$ satisfying $\sum_{i=1}^D i k_i=D$ and the fourth and sixth sums   
extend over all $k_1,...,k_D \in \N_0$ which in addition satisfy  $\sum_{i=1}^D k_i=m$. Let
$$
\beta:=16 \ee^{\gamma n}((2KC^n)\vee 1).
$$
Summing over $D$ from $H(N)$ to $\infty$, we see that, for $q\beta<1$, the expected number of admissible paths from $(x,0)$ to $(0,h)$ is at most 
$(2q)^h (q\beta)^{H(N)} (1-q\beta)^{-1}$. The total expected number of admissible paths starting from any point $(x,0),\;x \in \Z^n$ and ending at $(0,h)$ is bounded by 
$$
(2q)^h(1-q\beta)^{-1}\sum_{N=0}^{\infty} \Big(\big( \big( \tilde K N^{n-1}\big)\vee 1\big) (q\beta)^{H(N)}\Big).
$$ 
Here, $\tilde K$ is a constant chosen such that the number of $x \in \Z^n$ such that $\|x\|=N$ is bounded by $\tilde K N^{n-1} $ for all $N \in \N$. 
The sum is clearly finite provided $q>0$ is sufficiently small. Now, the first Borel-Cantelli Lemma implies that the largest $h$ for which there exists an admissible 
path from some $(x,0)$ and ending at $(0,h)$ is finite almost surely and therefore the assertion is proved.
\end{proof}

\begin{remark}
The theorem is sharp in the sense that it becomes wrong if in iii) ``$\inf$'' is dropped and ``$>$'' is replaced by ``$=$'' (this is a consequence of the second Borel-Cantelli Lemma) 
\end{remark}

\section{Construction of the supersolution}
\label{sec:construction}
The construction of the supersolution is performed in a series of steps. We first split up $\R^2$ into boxes large enough so that boxes that contain an obstacle of a minimum strength percolate in the sense of Section~\ref{sec:flat}. All obstacles not necessary for the percolation cluster are then disregarded. In each column of boxes we now have one  obstacle at position $(x_i, y(x_i))$. Starting from a periodic supersolution (assuming obstacles at $y=0$ and at periodic distance in $x$ with period larger than the box size), we construct a supersolution for obstacles centered at $(x_i, 0)$ by cutting out one period and using this function locally around obstacle sites. Finally, we can add a smooth function with less-than-linear growth (given by the percolation cluster) in order to obtain a supersolution that passes through the original obstacle sites.

In this section, we make frequent use of the equivalence of the integral representation and the Fourier representation of the fractional Laplacian. Furthermore, we use the symmetry of the fractional Laplace operator and the weak form of it by switching between applying it to a test function and the function itself. Further information can be found in~\cite{DiNezza:2011va}. The extension problem related to fractional Laplacians has been treated in~\cite{Caffarelli:2007en}

\begin{definition}
\label{def:g}
Consider thus first $a, b, \delta, F_2 >0$ with $a>4b$ and $\delta < \min\{1,b\}$. Let $\rho := b+\delta/2$. Let $F_1=\frac{\rho}{a-\rho}F_2$. We define
$$
\tilde{g}(x) := \left\{ \begin{array}{ll} F_2 &\quad \textrm{for $x\in [-b-\delta/2, b+\delta/2] = [-\rho, \rho]$} \\
-F_1 &\quad \textrm{for $x\in[-a,a]\setminus [-\rho, \rho]$},
\end{array} \right.
$$
periodically extended to the real line. Now let 
$$
g := \eta_{\delta/2} * \tilde{g},
$$
with $\eta_{\delta/2}$ a standard-mollifier\footnote{It is important that this standard mollifier used here is symmetric with respect to the origin.} with $|\supp \eta_{\delta/2} | = \delta$, i.e., radius $\delta/2$.
\end{definition}
\begin{remark}
Note that the following statements hold.
\begin{enumerate}[\upshape (i)]
\item $g$ is periodic with period $2a$,
\item $g(x) = F_2$ for $x\in (-b,b)$,
\item $g(x) = -F_1$ for $x\in (-2a+b+\delta, 2a-b-\delta) \setminus [-b-\delta, b+\delta]$
\item $\tilde{g}$ and $g$  have vanishing averages.
\end{enumerate}
\end{remark}

\begin{definition}
\label{def:v}
In order to construct a periodic supersolution we let $\tilde{v}$ be the modulo a constant unique and continuous periodic solution of
$$
\A\tilde{v} = \tilde{g}
$$
and set $v = \eta_{\delta/2} * \tilde{v}$.
The constant is set so that the average of $v$ vanishes.
\end{definition}
\begin{remark}
Note that $\A v = g$.
\end{remark}

An explicit uniformly converging Fourier series representation of $\tilde{v}$ can now easily be obtained and yields $L^\infty$-bounds on $\tilde{v}$ and $v$ as well as some symmetry properties.
\begin{lemma}\label{lem:stand_sol}
We have
\begin{enumerate}[\upshape (i)]
\item $\tilde{v}\left(x\right)=-2a^{2s}\left(\sum_{k=1}^{\infty}\frac{F_1+F_2}{\pi^{1+2s}}\sin\left(k\frac{\pi}{a}(b+\delta/2)\right)\frac{\cos\left(k\frac{\pi}{a}x\right)}{k^{1+2s}}\right) $
\item  $v(-x)=v(x)$ for all $x\in \mathbb R$, $\tilde{v}(-x)=\tilde{v}(x)$ for all $x\in \mathbb R$, $v$ and $\tilde{v}$ are periodic with period $2a$ and $v$ is continuous.

\item $\left\|v\right\|_{\infty} \leq \left\|\tilde{v}\right\|_{\infty}\leq\frac{2\left(F_1+F_2\right)}{\pi^{2s}}\zeta (2s)a^{2s-1}\left(b+\delta/2\right)$ for $s>1/2$

\item $\left\|v\right\|_{\infty} \leq\left\|\tilde{v}\right\|_{\infty} \\ \leq \frac{2}{\pi}\left(F_1+F_2\right)\left(b+\delta/2\right)\left(2+\log\left(a\right)-\log\left(\pi\left(b+\delta/2\right)\right)\right)$ for $s=1/2$
 
\end{enumerate}
Here, $\zeta$ denotes the Riemann zeta function.
\end{lemma}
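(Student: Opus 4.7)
The plan is to expand $\tilde g$ as a Fourier series on the period $2a$ and then invert $\A$ term by term. Because $\tilde g$ is even, $2a$-periodic and has vanishing mean (the latter being exactly the reason for the choice $F_1=\rho F_2/(a-\rho)$), only cosines appear, and
\[
\tilde g(x)=\sum_{k=1}^{\infty} c_k\cos\!\Big(k\tfrac{\pi}{a}x\Big),\qquad c_k=\frac{1}{a}\int_{-a}^{a}\tilde g(x)\cos\!\Big(k\tfrac{\pi}{a}x\Big)\dx.
\]
Splitting the integral over $|x|<\rho$ (where $\tilde g=F_2$) and $\rho<|x|<a$ (where $\tilde g=-F_1$) and noting that $\sin(k\pi)=0$, the two pieces combine cleanly into
\[
c_k=\frac{2(F_1+F_2)}{k\pi}\sin\!\Big(k\tfrac{\pi}{a}(b+\delta/2)\Big).
\]
Since $\A\cos(k\pi x/a)=-(k\pi/a)^{2s}\cos(k\pi x/a)$ (using the Fourier representation of $\A$ that the paper invokes earlier), the unique mean-zero, $2a$-periodic, continuous solution of $\A\tilde v=\tilde g$ is obtained by dividing each mode by $-(k\pi/a)^{2s}$, which gives precisely the expression claimed in (i). Absolute and uniform convergence is immediate from the bound $|c_k|\le 2(F_1+F_2)/(k\pi)$, which produces a series whose terms decay like $k^{-(1+2s)}$ with $1+2s\ge 2$.

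Statement (ii) follows at once: evenness from the fact that only cosines occur, $2a$-periodicity from each individual term, and continuity of $\tilde v$ from uniform convergence; convolution with the symmetric mollifier $\eta_{\delta/2}$ preserves each of these properties for $v$, and $\A v=g$ follows by moving the mollifier through $\A$ (both are Fourier multipliers). For (iii) I would apply $|\sin\theta|\le|\theta|$ to every term of (i) and get
\[
\|\tilde v\|_\infty \le 2a^{2s}\sum_{k=1}^{\infty}\frac{F_1+F_2}{\pi^{1+2s}}\cdot\frac{k\pi(b+\delta/2)/a}{k^{1+2s}} = \frac{2(F_1+F_2)(b+\delta/2)}{\pi^{2s}}\,a^{2s-1}\zeta(2s),
\]
which is the claimed bound, and $\|v\|_\infty\le\|\tilde v\|_\infty$ follows from $\eta_{\delta/2}\ge 0$ with $\int\eta_{\delta/2}=1$.

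The one genuinely delicate case is (iv). At $s=1/2$ the crude bound $|\sin\theta|\le|\theta|$ gives $\zeta(1)=\infty$, while $|\sin\theta|\le 1$ only yields $\zeta(2)$ with no information about the geometry, and neither is sharp. My plan is to split the series in (i) at the threshold $N:=\lfloor a/(\pi(b+\delta/2))\rfloor$: for $k\le N$ the argument $k\pi\rho/a$ is at most $1$, so I would use $|\sin(k\pi\rho/a)|\le k\pi\rho/a$, which produces a partial harmonic sum $\sum_{k=1}^{N}1/k\le 1+\log N\le 1+\log(a/(\pi(b+\delta/2)))$; for $k>N$ I would use $|\sin|\le 1$ together with $\sum_{k>N}k^{-2}\le 1/N\lesssim\pi(b+\delta/2)/a$. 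Reassembling the two pieces and pulling out the prefactor $2a(F_1+F_2)/\pi^2$ coming from (i) at $s=1/2$ gives a bound of the required shape $\tfrac{2}{\pi}(F_1+F_2)(b+\delta/2)(\mathrm{const}+\log(a/(\pi(b+\delta/2))))$. Matching the displayed constant $2$ in front of the log is then just a matter of being careful with the choice of the cutoff $N$ and with the explicit estimate $H_N\le 1+\log N$; the $L^\infty$ bound for $v$ again follows from Young's inequality applied to the mollification.
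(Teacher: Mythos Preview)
Your proposal is correct and follows essentially the same route as the paper. For (i)--(iii) you do exactly what the paper does (Fourier expansion of $\tilde g$, then $|\sin\theta|\le|\theta|$ for (iii); in fact the paper's wording ``bounding sine and cosine by unity'' is slightly imprecise---your version is the one that actually produces the factor $(b+\delta/2)$ and $\zeta(2s)$). For (iv) the paper uses the same split at $a/(\pi\rho)$ that you describe, but instead of your discrete bounds $H_N\le 1+\log N$ and $\sum_{k>N}k^{-2}\le 1/N$ it isolates the $k=1$ term and then applies the integral comparisons $\int_1^{a/(\pi\rho)}\tfrac{\pi\rho}{a}\,k^{-1}\,\dd k$ and $\int_{a/(\pi\rho)}^{\infty}k^{-2}\,\dd k$, which land directly on the constant $2$ without any juggling of the floor $N$.
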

\begin{proof}
The series representation follows from a straight-forward calculation, properties (ii)--(iii) follow directly from the representation by bounding the absolute value of the sine and cosine functions by unity. Property (iv) can be seen as follows. For $s=1/2$ we have
\begin{align*} \abs{\tilde{v}(x)}&=\abs{-2a\sum_{k=1}^{\infty}\frac{F_1+F_2}{\pi^2}\sin\left(k\frac{\pi}{a}\left(b+\delta/2\right)\right)\frac{\cos\left(k\frac{\pi}{a}x\right)}{k^2}}\\
&=\frac{2\left(F_1+F_2\right)}{\pi^2}a\abs{\sum_{k=1}^{\infty}\frac{\sin\left(k\frac{\pi}{a}\left(b+\delta/2\right)\right)\cos\left(k\frac{\pi}{a}x\right)}{k^2}}.
\end{align*}
With $\abs{\sin\left(k\frac{\pi}{a}\left(b+\delta/2\right)\right)}\leq k\frac{\pi}{a}(b+\delta/2)$ and $\abs{\cos\left(k\frac{\pi}{a}x\right)} \leq 1$ and $\abs{\sin\left(k\frac{\pi}{a}\left(b+\delta/2\right)\right)}\leq 1$ by splitting up the sum in two parts and using an integral estimate for each part one gets

\begin{align*}
\quad\abs{\tilde{v}(x)} \leq& \frac{2\left(F_1+F_2\right)}{\pi^2}a\left(\frac{\pi}{a}\left(b+\delta/2\right)+\int_{1}^{\frac{a}{\pi\left(b+\delta/2\right)}}\frac{\pi}{a}\left(b+\delta/2\right)\frac{1}{k}\, \mathrm{d} k+ \right. \\
& \left.\int_{\frac{a}{\pi\left(b+\delta/2\right)}}^{\infty}\frac{1}{k^2}\, \mathrm{d} k\right)\\
=&\frac{2\left(F_1+F_2\right)}{\pi^2}\left(\pi\left(b+\delta/2\right)\left(1+\log\left(\frac{a}{\pi\left(b+\delta/2\right)}\right)\right)+\frac{a}{\frac{a}{\pi\left(b+\delta/2\right)}}\right)\\
=&\frac{2\left(F_1+F_2\right)}{\pi}\left(b+\delta/2\right)\left(2+\log\left(a\right)-\log\left(\pi\left(b+\delta/2\right)\right)\right).
\end{align*}
\end{proof}

It is now necessary to establish some monotonicity properties of the function $v$.
\begin{lemma}\label{lem:MonVt}
The function $\tilde{v}$ strictly increases on $[0, a]$ (and thus by symmetry strictly decreases on $[-a, 0]$).
\end{lemma}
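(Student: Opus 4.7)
My plan is to differentiate the Fourier series for $\tilde{v}$ from Lemma~\ref{lem:stand_sol}(i) in $x$ and show the result is strictly positive on $(0,a)$. After the product-to-sum identity $\sin A\sin B=\tfrac12[\cos(A-B)-\cos(A+B)]$ and using the evenness of cosine, one arrives at
\begin{equation*}
\tilde{v}'(x) = \frac{a^{2s-1}(F_1+F_2)}{\pi^{2s}}\Bigl[\Phi_{2s}\bigl(\tfrac{\pi|\rho-x|}{a}\bigr) - \Phi_{2s}\bigl(\tfrac{\pi(\rho+x)}{a}\bigr)\Bigr],
\end{equation*}
where $\rho=b+\delta/2$ and $\Phi_\alpha(\theta):=\sum_{k\ge1} k^{-\alpha}\cos(k\theta)$. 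For $s>1/2$ termwise differentiation is justified by absolute convergence of $\sum k^{-2s}$; for $s=1/2$ I would bypass the derivative and work with $\tilde{v}(x_2)-\tilde{v}(x_1)$ directly via the absolutely convergent series of Lemma~\ref{lem:stand_sol}(i) together with the identity $\cos B - \cos A = -2\sin\tfrac{A+B}{2}\sin\tfrac{A-B}{2}$, or alternatively use the closed form $\Phi_1(\theta)=-\log(2\sin(\theta/2))$ on $(0,2\pi)$.

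The argument then reduces to two facts about $\Phi_{2s}$. First, $\Phi_{2s}(\theta)=\Phi_{2s}(2\pi-\theta)$ on $[0,2\pi]$, by the evenness and $2\pi$-periodicity of cosine. Second, $\Phi_{2s}$ is strictly decreasing on $(0,\pi)$: for $s\in(1/2,1)$ this follows from Fej\'er's classical positivity inequality applied to $-\Phi_{2s}'(\theta)=\sum_{k\ge1}k^{1-2s}\sin(k\theta)$, whose coefficients $k^{-(2s-1)}$ are positive, convex and decreasing; for $s=1/2$ the closed form yields $\Phi_1'(\theta)=-\tfrac12\cot(\theta/2)<0$ directly. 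Combining these, to deduce $\Phi_{2s}(\pi|\rho-x|/a) > \Phi_{2s}(\pi(\rho+x)/a)$ it suffices to verify that the first argument is farther from $\pi$ than the second, which rearranges to
\begin{equation*}
a - |\rho-x| > |a - \rho - x|.
\end{equation*}
A case split on the sign of $a-\rho-x$ reduces this to the elementary inequalities $|\rho-x|<\rho+x$ when $\rho+x\le a$ and $|\rho-x|+\rho+x<2a$ when $\rho+x>a$, both trivially true for $x,\rho\in(0,a)$; note that $\rho=b+\delta/2<3b/2<a$ by the standing hypotheses $a>4b$ and $\delta<b$, so the relevant range of $\rho$ is indeed $(0,a)$.

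The main analytical obstacle is the monotonicity of $\Phi_{2s}$ on $(0,\pi)$, i.e.\ the Fej\'er-type positivity of a sine series with convex decreasing coefficients, together with the boundary case $s=1/2$ where the differentiated Fourier series fails to converge absolutely and must be handled through the closed-form logarithm or by the direct difference-quotient argument sketched above. Once these ingredients are in place, the conclusion $\tilde{v}'(x)>0$ on $(0,a)$, and hence strict monotonicity of $\tilde{v}$ on $[0,a]$, follows immediately.
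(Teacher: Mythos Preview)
Your argument is correct and takes a genuinely different route from the paper. The paper does not touch the Fourier series of $\tilde v'$ at all: instead it proves (Propositions~\ref{prop:mon_gt} and~\ref{prop:vpp}) that $\tilde v''=-(-\Delta)^{1-s}\tilde g$ on $E^+\cup E^-$, reads off the sign of $(-\Delta)^{1-s}\tilde g$ directly from the singular integral representation (trivial because $\tilde g$ is a two–valued step function), and then integrates from the symmetry endpoints $\tilde v'(0)=\tilde v'(a)=0$ via the fundamental theorem of calculus. Your approach instead stays entirely on the Fourier side, rewriting $\tilde v'$ as a difference of Clausen--type sums $\Phi_{2s}$ and invoking the classical convex--coefficient positivity of sine series to get strict monotonicity of $\Phi_{2s}$ on $(0,\pi)$.

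Each approach has a cost the other avoids. The paper must justify the identity $\Delta\tilde v=-(-\Delta)^{1-s}\tilde g$ pointwise away from the jump set, which takes a full mollification argument (Proposition~\ref{prop:vpp}); in return it obtains the convexity/concavity structure of $\tilde v$ (inflection exactly at $\pm\rho$) and treats all $s\in[1/2,1)$ uniformly. Your argument sidesteps that regularity work but imports the Fej\'er--Young positivity theorem as a black box, and the endpoint $s=1/2$ forces either the closed form $\Phi_1(\theta)=-\log\bigl(2\sin(\theta/2)\bigr)$ or the difference--quotient workaround you sketch, since the termwise derivative no longer converges absolutely (indeed $\Phi_1$ blows up at $\theta=0$, reflecting that $\tilde v'$ is unbounded at $x=\rho$ when $s=1/2$). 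Both routes are sound; yours is shorter once the classical positivity result is granted, while the paper's is more self--contained and yields the extra second--order information.
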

We prove this lemma as a consequence of the following two Propositions by showing positivity (respectively, negativity) of $\tilde{v}'$. This property remains valid under mollification, as shown in Lemma~\ref{lem:monoticity}.

We denote by $E^+ := \bigcup_{k=-\infty}^{\infty}\left(2ka-b-\delta/2,\, 2ka+b+\delta/2\right)$ the set where the second derivative $\tilde{v}''$ will be shown to be strictly positive and $E^- := \R\setminus \overline{E^+}$ the set where the second derivative will be shown to be strictly negative. We also first have to show smoothness of $\tilde{v}$ on the union of those two sets. 
\begin{proposition}
\label{prop:mon_gt}
Let $p\in (0,1)$. We have that $\left(-\Delta\right)^{p} \tilde{g}\left(x\right)$ (given by its integral representation) exists for all $x\in E^+\cup E^-$ and one has
$$
	\left(-\Delta\right)^{p}\tilde{g}\left(x\right)
	\begin{cases}
	>0 & x\in E^+\\
	<0 & x\in E^-
	\end{cases}
$$
\end{proposition}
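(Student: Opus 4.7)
The plan is to apply the singular integral representation of the fractional Laplacian directly,
$$
(-\Delta)^p \tilde g(x) = c_{1,p}\,\pv\!\int_{\R} \frac{\tilde g(x)-\tilde g(y)}{|x-y|^{1+2p}}\,\dd y
$$
with a positive constant $c_{1,p}>0$, and to exploit the fact that $\tilde g$ is a two-valued function attaining only the values $F_2$ and $-F_1$: the open set $E^+$ is exactly where $\tilde g \equiv F_2$ (its maximum) and $E^-$ is exactly where $\tilde g \equiv -F_1$ (its minimum).

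First I would verify convergence of the integral. If $x \in E^+$, then by construction $x$ lies in an open subinterval of $E^+$, so $\tilde g$ is identically $F_2$ in a whole neighborhood of $x$. The numerator $\tilde g(x)-\tilde g(y)$ therefore vanishes for all $y$ near $x$, so the would-be singularity of the kernel at $y=x$ is never seen and the principal value actually reduces to an ordinary Lebesgue integral. Combined with boundedness of $\tilde g$ and integrability of $|x-y|^{-(1+2p)}$ at infinity (for $p>0$), this yields absolute convergence. The same argument applies to $x \in E^-$.

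For the sign statement, on $E^+$ one has $\tilde g(x)-\tilde g(y) = F_2 - \tilde g(y) \ge 0$ for every $y \in \R$, with the strict inequality $\tilde g(x)-\tilde g(y) = F_1+F_2 > 0$ on the set $E^-$, which has positive Lebesgue measure. Since the kernel $|x-y|^{-(1+2p)}$ is strictly positive and $c_{1,p} > 0$, the integral is strictly positive, proving $(-\Delta)^p \tilde g(x) > 0$. The case $x \in E^-$ is completely symmetric after flipping signs: there $\tilde g(x)-\tilde g(y) \le 0$ everywhere, with strict negativity on the positive-measure set $E^+$, so $(-\Delta)^p \tilde g(x) < 0$.

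I do not foresee a serious technical obstacle; the only delicate point is justifying the pointwise integral formula for a function that is merely piecewise constant, and this is precisely handled by the local constancy of $\tilde g$ around each $x \in E^+ \cup E^-$. That same local constancy simultaneously removes the singularity at $y=x$ and makes the integrand a function of definite sign on all of $\R$, so the conclusion follows without any further estimates.
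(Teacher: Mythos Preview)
Your proof is correct and follows essentially the same route as the paper: apply the integral representation, use that $\tilde g$ is locally constant near any $x\in E^+\cup E^-$ so the principal value is an ordinary absolutely convergent integral, and conclude the sign from the fact that $\tilde g(x)-\tilde g(y)$ is nonnegative (resp.\ nonpositive) everywhere and equals $\pm(F_1+F_2)$ on a set of positive measure. If anything, your justification of convergence is spelled out more carefully than the paper's.
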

\begin{proof}
Take $x\in E^+$, then one has (with some $C>0$)
\begin{align*}
	\left(-\Delta\right)^{p}\tilde{g}\left(x\right)=& C\, \pv \int_{\mathbb R}\frac{\tilde{g}\left(x\right)-\tilde{g}\left(y\right)}{\abs{x-y}^{1+2p}}\, \dy \\
	=& C\, \pv\int_{\mathbb R}\frac{F_1+F_2}{\abs{x-y}^{1+2p}}\chi_{E^+}\left(y\right) \, \dy >0
\end{align*}
where $\chi_{E^+}$ is the characteristic function of $E^+$. For $x\in E^-$ the same calculation gives
\begin{align*}
	\left(-\Delta\right)^p\tilde{g}\left(x\right)=&C\, \pv \int_{\mathbb R}\frac{\tilde{g}\left(x\right)-\tilde{g}\left(y\right)}{\abs{x-y}^{1+2p}}\, \dy \\
	=&-C\, \pv\int_{\mathbb R}\frac{F_1+F_2}{\abs{x-y}^{1+2p}}\chi_{E^-}\left(y\right) \, \dy <0
\end{align*}
\end{proof}

\begin{proposition}
\label{prop:vpp}
For all $x\in  E^+ \cup E^-$ the function $\tilde{v}$ is twice differentiable with 
\begin{align*}
\tilde{v}''\left(x\right)=\Delta \tilde{v} \left(x\right)=\left(-\Delta\right)^{1-s}\left(-\left(-\Delta\right)^s\tilde{v}\left(x\right)\right)=-\left(-\Delta\right)^{1-s}\tilde{g}\left(x\right).
\end{align*}
\end{proposition}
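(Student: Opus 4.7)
The plan is to derive the stated chain of equalities from the semigroup property $(-\Delta)^{1-s}\circ(-\Delta)^s = -\Delta$, valid for $s\in(0,1)$, combined with the defining equation $\A\tilde v = \tilde g$. Formally applying $(-\Delta)^{1-s}$ to $(-\Delta)^s\tilde v$ reduces the claim to a pointwise identity involving $(-\Delta)^{1-s}\tilde g$, whose principal-value integral is already known to converge on $E^+\cup E^-$ by Proposition~\ref{prop:mon_gt}. The substance of the argument is (a) to show that $\tilde v$ is classically twice differentiable on $E^+\cup E^-$ and (b) to justify the composition formula pointwise on this set.

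For (a), I would use interior regularity for the fractional Laplace equation. On each connected component of $E^+\cup E^-$ the datum $\tilde g$ is constant, so standard Schauder-type interior estimates for $(-\Delta)^s$ (for instance Silvestre's regularity theory, or a direct analysis of the integral representation) give $\tilde v\in C^\infty_{\mathrm{loc}}$ on each component by bootstrapping. Concretely one may split $\tilde g = \tilde g_{\mathrm{loc}} + \tilde g_{\mathrm{far}}$ with a smooth cut-off supported in a small neighbourhood of a fixed $x\in E^+\cup E^-$, represent $\tilde v$ (modulo a smooth periodic correction) as the convolution of $-\tilde g$ with the Riesz kernel of order $2s$, and differentiate under the integral sign: the far piece is trivially $C^\infty$ at $x$, while the local piece inherits the smoothness of $\tilde g_{\mathrm{loc}}$.

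For (b), I plan to argue by mollification. Setting $\tilde g_\varepsilon := \eta_\varepsilon*\tilde g$ and $\tilde v_\varepsilon := \eta_\varepsilon*\tilde v$, the identity $\Delta\tilde v_\varepsilon = (-\Delta)^{1-s}(-(-\Delta)^s \tilde v_\varepsilon)$ holds everywhere classically by a direct Fourier computation based on Lemma~\ref{lem:stand_sol}(i), since the resulting cosine series now converges absolutely in all orders. At a fixed $x\in E^+\cup E^-$ I would then pass to the limit $\varepsilon\to 0$: the left-hand side tends to $\Delta\tilde v(x)$ by the $C^2_{\mathrm{loc}}$-regularity established in (a) together with the fact that convolution with $\eta_\varepsilon$ commutes with differential operators, while the right-hand side tends to $(-\Delta)^{1-s}\tilde g(x)$ by a dominated-convergence argument inside the principal-value integral, essentially parallel to the one used in Proposition~\ref{prop:mon_gt} and relying on the boundedness of $\tilde g$ and the fact that the integrand is locally integrable near $y=x$ once $\tilde g(x)$ is subtracted.

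The main obstacle is the borderline case $s=1/2$, where $2s$ equals the integer $1$ and the exponent $1-s=1/2$ places $(-\Delta)^{1-s}\tilde g$ at the same singular scale as in the original problem, so that the Fourier series obtained by twice termwise differentiating Lemma~\ref{lem:stand_sol}(i) is only conditionally (not absolutely) convergent. Here I expect to exploit the explicit piecewise-constant structure of $\tilde g$: because its jumps lie precisely on $\partial E^+$, for $x$ in the interior of $E^+\cup E^-$ one can subtract the local constant value $\tilde g(x)$ on a neighbourhood of $x$ before integrating, producing a classically convergent expression for $(-\Delta)^{1-s}\tilde g(x)$ to which the mollification-and-limit procedure of (b) applies uniformly in $\varepsilon$ and closes the proof in this endpoint case as well.
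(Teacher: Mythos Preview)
Your proposal is correct and shares with the paper the central idea of mollifying $\tilde v$ and passing to the limit. The difference is in how the two steps are organized. You first establish $\tilde v\in C^2_{\mathrm{loc}}(E^+\cup E^-)$ by appealing to interior Schauder-type estimates or a Riesz-kernel representation (your part (a)), and only afterwards pass to the limit pointwise on both sides (part (b)). The paper does the opposite: it never invokes external regularity theory. Instead it proves, via a weak-formulation calculation, that on any compact $[-\beta,\beta]$ contained in a component of $E^+\cup E^-$ one has $(-\Delta)^{1-s}(\tilde g\ast\eta_\varepsilon)=((-\Delta)^{1-s}\tilde g)\ast\eta_\varepsilon$, and then uses the continuity of $(-\Delta)^{1-s}\tilde g$ on that interval (already known from Proposition~\ref{prop:mon_gt}) to obtain \emph{uniform} convergence of $\tilde v_\varepsilon''$. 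That uniform convergence, together with $\tilde v_\varepsilon\to\tilde v$ uniformly, simultaneously yields both $\tilde v\in C^2$ and the identity $\tilde v''=-(-\Delta)^{1-s}\tilde g$.

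This buys two things. First, the paper's argument is entirely self-contained: no Silvestre regularity, no Riesz potentials. Second, it treats all $s\in[1/2,1)$ uniformly. The difficulty you flag at $s=1/2$ is an artifact of your route in (a): the Riesz kernel of order $2s=1$ in one dimension is logarithmic rather than a power, which complicates the convolution representation. The paper never needs that representation; the only singular integral it handles is $(-\Delta)^{1-s}\tilde g$, whose kernel $|x-y|^{-(3-2s)}$ with exponent $3-2s\in(1,2]$ causes no special trouble at $s=1/2$ once one exploits that $\tilde g$ is constant near $x$. Your final paragraph is therefore unnecessary if you drop the Riesz-kernel argument in (a) and instead deduce regularity from uniform convergence of second derivatives as the paper does.
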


\begin{proof}
Let $\BB:=(-\Delta)^{1-s}$

For $\BB$ given by the integral representation $\BB \tilde{g}$ is continous on $(-\rho,\, \rho)$ and $\left[-a,\, a\right]\backslash \left[-\rho,\, \rho\right]$.
For $\left[-\beta,\, \beta \right]\subset (-\rho,\, \rho)$ take $\epsilon>0$ so small that $\left[-\beta-2\epsilon,\, \beta+2\epsilon\right]\subset (-\rho,\, \rho)$.

Take a standard mollifier $\eta_{\epsilon}$ with radius $\epsilon$. Define $\tilde{v}_{\epsilon}:=\tilde{v}\ast\eta_{\epsilon}$ and take a test function $\psi\in C_c^{\infty}$ with support $\supp \psi \subset \left[-\beta,\, \beta\right]$. Then one has for $x\in\left[-\beta,\, \beta\right]$
\begin{align*}
&\quad \tilde{v}_{\epsilon}''(x)=\Delta \tilde{v}_{\epsilon}(x)=-\BB(\A \tilde{v}_{\epsilon}(x))=-\BB(\A(\tilde{v}\ast\eta_{\epsilon}(x)))\\
&=-\BB((\A\tilde{v})\ast\eta_{\epsilon}(x))=-\BB(\tilde{g}\ast\eta_{\epsilon}(x))
\end{align*}
As $\tilde{g}\ast\eta_{\epsilon}$ is smooth and bounded one has 
\begin{align*}
&\quad \scalar{-\BB(\tilde{g}\ast\eta_{\epsilon}),\, \psi}=\scalar{\tilde{g}\ast \eta_{\epsilon},\, -\BB\psi}=\scalar{\tilde{g},\, -(\BB\psi)\ast\eta_{\epsilon}}\\
&= \scalar{\tilde{g},\,-\BB(\psi\ast\eta_{\epsilon}) }
\end{align*}
Using this together with the integral representation of $\BB$ one gets up to some constant
\begin{align*}
&\quad2\scalar{\tilde{g},\, \BB(\psi\ast\eta_{\epsilon})}\\
&=\int_{\R}\tilde{g}(x)\left(\int_{\R}\frac{\psi\ast\eta_{\epsilon}(x)-\psi\ast\eta_{\epsilon}(y)}{\abs{x-y}^{1+2(1-s)}}\dy\right)\dx \\
&\quad -\int_{\R}\tilde{g}(y)\left(\int_{\R}\frac{\psi\ast\eta_{\epsilon}(x)-\psi\ast\eta_{\epsilon}(y)}{\abs{x-y}^{1+2(1-s)}}\dx\right)\dy\\
&=\int_{\R}\int_{\R}\frac{\tilde{g}(x)-\tilde{g}(y)}{\abs{x-y}^{3-2s}}(\psi\ast\eta_{\epsilon}(x)-\psi\ast\eta_{\epsilon}(y))\dx\dy\\
&=\int_{\R}\int_{\R}\frac{\tilde{g}(x)-\tilde{g}(y)}{\abs{x-y}^{3-2s}}\psi\ast\eta_{\epsilon}(x)\dy\dx-\int_{\R}\int_{\R}\frac{\tilde{g}(x)-\tilde{g}(y)}{\abs{x-y}^{3-2s}}\psi\ast\eta_{\epsilon}(y)\dx\dy\\
&=\int_{-\beta-\epsilon}^{\beta+\epsilon}\left(\int_{\R}\frac{\tilde{g}(x)-\tilde{g}(y)}{\abs{x-y}^{3-2s}}\dy\right)\psi\ast\eta_{\epsilon}(x)\dx\\
&\quad -\int_{-\beta-\epsilon}^{\beta+\epsilon}\left(\int_{\R}\frac{\tilde{g}(x)-\tilde{g}(y)}{\abs{x-y}^{3-2s}}\dx\right)\psi\ast\eta_{\epsilon}(y)\dy\\
&=2\scalar{\BB \tilde{g},\, \psi\ast\eta_{\epsilon}}
\end{align*}
From the choice of $\left[-\beta,\, \beta\right]$, $\epsilon$ as well as $\supp \psi\subset \left[-\beta,\, \beta\right]$ one has
\begin{align*}
\quad\scalar{\BB \tilde{g},\, \psi\ast\eta_{\epsilon}}&=\int_{-\beta-\epsilon}^{\beta+\epsilon}\BB \tilde{g}(x)\int_{\-\beta}^{\beta}\psi(y)\eta_{\epsilon}(x-y)\dy\dx\\
&=\int_{-\beta-\epsilon}^{\beta+\epsilon}\int_{-\beta}^{\beta}\BB \tilde{g}(x)\psi(y)\eta_{\epsilon}(x-y)\dy\dx\\
&=\int_{-\beta-\epsilon}^{\beta+\epsilon}\int_{-\beta}^{\beta}\BB \tilde{g}(x)\psi(y)\eta_{\epsilon}(y-x)\dy\dx\\
&=\int_{-\beta}^{\beta}\psi(y)\int_{-\beta-\epsilon}^{\beta+\epsilon}\BB \tilde{g}(x)\eta_{\epsilon}(y-x)\dx\dy=\scalar{\BB \tilde{g}\ast\eta_{\epsilon},\, \psi}
\end{align*}
Finally this shows that
\begin{align*}
\tilde{v}_{\epsilon}''(x)=-\BB(\tilde{g}\ast\eta_{\epsilon}(x))=(-\BB \tilde{g})\ast \eta_{\epsilon}(x) 
\end{align*}
for all $x\in\left[-\beta,\, \beta\right]$.

In the same way one can show that
\begin{align*}
\tilde{v}_{\epsilon}''(x)=-\BB(\tilde{g}\ast\eta_{\epsilon}(x))=(-\BB \tilde{g})\ast \eta_{\epsilon}(x) 
\end{align*}
for all $x\in\left[-\beta,\, \beta\right]\backslash \left[-\rho,\, \rho\right]$ with $\left[-\beta,\, \beta\right]\backslash \left[-\rho,\, \rho\right]\subset \left[-a,\, a\right]\backslash \left[-\rho,\, \rho\right]$

From the continuity of $-\BB \tilde{g}$ on $\left[-\beta,\, \beta\right]$ one gets that $\tilde{v}_{\epsilon}''=(-\BB \tilde{g})\ast \eta_{\epsilon}$ converges uniformly to $-\BB \tilde{g}$ on $\left[-\beta,\, \beta\right]$. 

Furthermore one has

\begin{enumerate}[\upshape (i)]
	\item $\tilde{v}_{\epsilon}\rightarrow \tilde{v}$ uniformly on $\left[-\beta,\, \beta\right]$
	\item $\tilde{v}\in H^1(\left[-\beta,\,\beta\right])$ and therefore $\tilde{v}''=\Delta \tilde{v}\in H^{-1}\left(\left[-\beta,\,\beta\right]\right)$
\end{enumerate}

Then one has for any test function $\psi\in C_c^{\infty}$ with support in $\left[-\beta,\, \beta\right]$
\begin{align*}
\scalar{\tilde{v},\, \psi''}=\lim_{\epsilon\to 0}\scalar{\tilde{v}_{\epsilon},\, \psi''}=\lim_{\epsilon\to 0}\scalar{\tilde{v}_{\epsilon}'',\, \psi}=\scalar{-\BB \tilde{g},\, \psi}
\end{align*}

This shows that $\tilde{v}''= -\BB \tilde{g}$ in $H^{-1}(\left[-\beta,\, \beta\right])$ and in the sense of distributions. As $-\BB \tilde{g}$ is continuous on $\left[-\beta,\, \beta\right]$ this proves that $\tilde{v}\in C^2\left(\left[-\beta,\,\beta\right]\right)$.

As $\left[-\beta,\, \beta\right]\subset \left(-\rho,\, \rho\right)$ was arbitrary one gets

\begin{align*}
\tilde{v}''(x)=-\BB\tilde{g}(x)=-(-\Delta)^{1-s}\tilde{g}(x)
\end{align*}
for all $x\in \left(-\rho,\, \rho\right)$.

In the same way one shows the assertion for $x\in \left[-a,\,a\right]\backslash \left[-\rho,\, \rho\right]$ and from periodicity the statement follows.
\end{proof}

\begin{remark}
For symmetry reasons we have $\tilde{v}'(0)=\tilde{v}'(a)=0$.
\end{remark}

\begin{proof}[Proof of Lemma~\ref{lem:MonVt}]
According to Proposition~\ref{prop:vpp}, $\tilde{v}''$ exists on $E^+\cup E^-$ and is strictly positive on $E^+$ and strictly negative on $E^-$. Because of $\tilde{v}'\left(0\right)=0=\tilde{v}'\left(a\right)$ we get for $x\in \left[0,\, a\right]\backslash\left\{b+\delta/2\right\}$ by the fundamental theorem of calculus
\begin{align*}
\tilde{v}'\left(x\right)&=
\begin{cases}
\int_{0}^{x}\left(-\Delta\right)^{1-s}\tilde{g}\left(y\right)\dy & x\in \left[0,\, b+\delta/2\right)\\
-\int_{x}^{a}\left(-\Delta\right)^{1-s}\tilde{g}\left(y\right)\dy & x\in \left(b+\delta/2,\, a\right]
\end{cases}\\
&>0
\end{align*}
where in the last step Proposition~\ref{prop:mon_gt} was used.
\end{proof}

\begin{lemma}\label{lem:monoticity}
The function $v$ strictly increases on $\left[0,\, a\right]$ (and therefore by symmetry strictly decreases on $\left[-a,\, 0\right]$).
\end{lemma}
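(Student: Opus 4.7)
The plan is to reduce the statement to showing $v'(x)>0$ for every $x\in(0,a)$: combined with the smoothness of $v$ this implies strict increase on $[0,a]$, and the symmetry $v(-x)=v(x)$ then yields strict decrease on $[-a,0]$. Because $v=\eta_{\delta/2}*\tilde v$ is the convolution of a $C_c^\infty$ kernel with a continuous periodic function, $v\in C^\infty(\R)$. The proof of Lemma~\ref{lem:MonVt} represents $\tilde v'$ on $[0,b+\delta/2)$ and $(b+\delta/2,a]$ as integrals of $(-\Delta)^{1-s}\tilde g$, which are continuous in $x$ and agree at $x=b+\delta/2$ (because $\tilde v'(0)=\tilde v'(a)=0$); extending by evenness and $2a$-periodicity gives $\tilde v\in C^1(\R)$, so the classical identity
$$
v'(x) = \int_{-\delta/2}^{\delta/2}\eta_{\delta/2}(y)\,\tilde v'(x-y)\,\dy
$$
is available.

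Next, I would exploit the symmetry $\eta_{\delta/2}(-y)=\eta_{\delta/2}(y)$ to fold the integral onto $[0,\delta/2]$ and rewrite
$$
v'(x) = \int_0^{\delta/2}\eta_{\delta/2}(y)\,\bigl[\tilde v'(x-y)+\tilde v'(x+y)\bigr]\,\dy.
$$
The bracket is the $y$-derivative of $F(y):=\tilde v(x+y)-\tilde v(x-y)$. Since $\eta_{\delta/2}(\delta/2)=0$ (standard mollifier) and $F(0)=0$, integration by parts produces
$$
v'(x) = -\int_0^{\delta/2}\eta_{\delta/2}'(y)\,\bigl[\tilde v(x+y)-\tilde v(x-y)\bigr]\,\dy.
$$
The standard mollifier is radially non-increasing, so $-\eta_{\delta/2}'(y)\ge 0$ on $[0,\delta/2]$ with strict inequality on a set of positive measure. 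It therefore suffices to prove the pointwise estimate $\tilde v(x+y)>\tilde v(x-y)$ for every $x\in(0,a)$ and $y\in(0,\delta/2]$.

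I expect this pointwise inequality to be the main obstacle, and I would handle it by a short case distinction using that $\tilde v|_{[-a,a]}=\psi(|\cdot|)$ for $\psi:=\tilde v|_{[0,a]}$ strictly increasing (Lemma~\ref{lem:MonVt}). If $x+y\le a$, then both $x+y$ and $|x-y|$ lie in $[0,a]$, and $(x+y)^2-(x-y)^2=4xy>0$ yields $x+y>|x-y|$, hence $\psi(x+y)>\psi(|x-y|)$. If instead $x+y>a$, then $x>a-\delta/2>a/2>y$ (using $a>4b>4\delta$), so $x-y\in(0,a)$; $2a$-periodicity together with evenness give $\tilde v(x+y)=\tilde v(2a-x-y)=\psi(2a-x-y)$, and the inequality reduces to $2a-x-y>x-y$, i.e.\ $x<a$, which holds by assumption. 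Combining the two cases and the integration-by-parts identity establishes $v'(x)>0$ on $(0,a)$ and completes the proof.
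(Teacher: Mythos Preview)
Your overall strategy is sound and your case analysis for the pointwise inequality $\tilde v(x+y)>\tilde v(x-y)$ is correct. There is, however, one genuine slip: the claim that $\tilde v\in C^1(\R)$ is false when $s=1/2$. In that case $(-\Delta)^{1-s}\tilde g=(-\Delta)^{1/2}\tilde g$ has a non-integrable singularity $\sim |x-\rho|^{-1}$ at the jump points, so the integrals you quote from the proof of Lemma~\ref{lem:MonVt} diverge as $x\to\rho$; equivalently, the Fourier coefficients of $\tilde v'$ decay only like $1/k$ and the series has a logarithmic blow-up at $\pm\rho$. The good news is that you never actually use $C^1$: the identity $v'=\eta_{\delta/2}*\tilde v'$ and the subsequent integration by parts only require $\tilde v\in W^{1,1}_{\mathrm{loc}}$ (indeed $\tilde v'\in L^2_{\mathrm{loc}}$, as the paper notes), and one can also arrive at your integrated-by-parts formula directly by differentiating the mollifier, $v'=\eta_{\delta/2}'*\tilde v$, and folding via the oddness of $\eta_{\delta/2}'$. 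So the repair is a one-line replacement of the regularity hypothesis.

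Compared with the paper, your route is a legitimate and more explicit way of unpacking its one-line claim that ``the convolution preserves positivity due to the symmetry properties of $\tilde v$ and the mollifier''. The paper works directly with $\tilde v'*\eta_{\delta/2}$ and (implicitly) uses the odd symmetry of $\tilde v'$ about $0$ and about $a$, together with the evenness and radial monotonicity of $\eta_{\delta/2}$, to see that the negative contributions of $\tilde v'$ near the endpoints are dominated by reflected positive ones. You instead integrate by parts to reduce everything to a pointwise comparison $\tilde v(x+y)>\tilde v(x-y)$, which follows immediately from the strict monotonicity of $\tilde v$ on $[0,a]$ and its reflection symmetries. Both arguments ultimately rely on the same structural ingredients (symmetry of $\tilde v$ and a radially non-increasing symmetric mollifier); your version has the advantage of making the mechanism completely transparent and of avoiding any direct handling of $\tilde v'$ near its singularities.
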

\begin{proof}
Note first that $\tilde{v}' \in L^2_{loc}$ as a weak derivative.
Since 
$$
	v'=\frac{\dd}{\dx}\left(\tilde{v} * \eta_\delta \right)\equiv \left(\frac{\dd}{\dx}\tilde{v}\right)* \eta_\delta
$$
one gets
$$
	v'\left(x\right)=\left(\left(\frac{\dd}{\dx}\tilde{v}\right)* \eta_\delta \right)\left(x\right)>0
$$
for all $x\in \left(0,\, a\right)$ according to Lemma~\ref{lem:MonVt} and because the convolution preserves positivity in this case due to the symmetry properties of $\tilde{v}$ and the mollifier.

\end{proof}

We now split $\R^2$ into boxes large enough so that the percolation theorem from Section~\ref{sec:flat} can be applied to boxes that contain an obstacle.

\begin{definition}
For $k \in \Z$, $j \in \N$ and $l,d,h >0$, $l>2r_1$ let
\begin{align*}
Q_k :=& [k(l+d) - l/2, k(l+d)+l/2] \\ 
\tilde{Q}_k :=& [k(l+d) - l/2 + r_1, k(l+d)+l/2 - r_1] \\
\tilde{Q}_{kj} :=& \tilde{Q}_k \times  [ (j-1)h, jh]
\end{align*}
\end{definition}

The following is a direct result from Section~\ref{sec:flat} and Assumption~\ref{ass:obstacles}.
\begin{proposition}\label{prop:prop}
Let $0<\alpha<1$, $H(k) := \lfloor k^\alpha \rfloor$ (i.e., the integer floor of $k^\alpha$) and let $V := (l-2r_1)h >0$, $q>0$ such that 
$$
1- \exp\{-\lambda V \cdot \P\{f_1 \ge q\}  \} > p_H(1)
$$
from Theorem~\ref{thm:perc}. Then, almost surely, there exist a random function $\Lambda \colon \Z \to \N$ with $\abs{\Lambda(x)-\Lambda(y)} \le H(\abs{x-y})$ for all $x,y\in\Z$ and a mapping $I \colon \Z \to K$ with 
$$
(x_{I(k)}, y_{I(k)}) \in \tilde{Q}_{k,\Lambda(k)}, \quad f_{I(k)}\ge q
$$
for all $k\in \Z$. In the following we denote by $I$ the set $I(\Z)$.
\end{proposition}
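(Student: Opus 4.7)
The plan is to realize the boxes $\tilde{Q}_{kj}$ as the vertices of an iid Bernoulli site percolation model on $\Z^2$, to which Theorem~\ref{thm:perc} applies with $n=1$. For $(k,j)\in\Z\times\N$ I would declare the site $(k,j)$ \emph{open} if $\tilde{Q}_{kj}$ contains at least one obstacle centre $(x_i,y_i)$ with strength $f_i\ge q$. For $j\le 0$ the definition is irrelevant, since $\Lambda$ from Theorem~\ref{thm:perc} will take values in $\N$; any iid extension (or simply declaring those sites closed) will do.

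Two hypotheses then need to be checked. First, the openness indicators must be iid Bernoulli of the required parameter. The $\tilde{Q}_{kj}$ are pairwise disjoint: the $\tilde{Q}_k$ are horizontally separated by a gap of size $d+2r_1>0$, and the slabs $[(j-1)h,jh]$ are vertically disjoint. Independence of the indicators then follows from the complete independence property of the Poisson process of obstacle centres together with the independence of the iid marks $f_i$ from each other and from the centres. By Poisson thinning, the strong obstacles in any given $\tilde{Q}_{kj}$ form a Poisson random variable of mean $\lambda V\cdot \P\{f_1\ge q\}$ with $V=(l-2r_1)h$, so each site $(k,j)$ is open with probability $p=1-\exp\{-\lambda V\cdot\P\{f_1\ge q\}\}$, which by the standing assumption of the proposition exceeds $p_H(1)$. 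Second, the function $H(k)=\lfloor k^\alpha\rfloor$ is nondecreasing with $H(0)=0$, $H(1)=1$, and $\liminf_{k\to\infty}H(k)/\log k=+\infty>0$, so conditions (i)--(iii) of Theorem~\ref{thm:perc} are met.

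Theorem~\ref{thm:perc} then provides, almost surely, a random function $\Lambda\colon\Z\to\N$ satisfying $\abs{\Lambda(k)-\Lambda(k')}\le H(\abs{k-k'})$ and such that $(k,\Lambda(k))$ is open for every $k\in\Z$. By construction, openness of $(k,\Lambda(k))$ exactly says that $\tilde{Q}_{k,\Lambda(k)}$ contains some $i\in K$ with $(x_i,y_i)\in\tilde{Q}_{k,\Lambda(k)}$ and $f_i\ge q$; measurably selecting one such index (for instance, the lexicographically smallest in a fixed enumeration of $K$) yields the map $I\colon\Z\to K$ claimed in the proposition. The only substantive step in the argument is the iid Bernoulli reduction carried out in the second paragraph, and this is an immediate consequence of the thinning and independent-scattering properties of Poisson processes; everything else is bookkeeping and a direct invocation of Theorem~\ref{thm:perc}.
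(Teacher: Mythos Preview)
Your proof is correct and is exactly the argument the paper has in mind: the paper does not write out a proof at all, merely stating that the proposition ``is a direct result from Section~\ref{sec:flat} and Assumption~\ref{ass:obstacles}.'' You have supplied the intended details---disjointness of the $\tilde Q_{kj}$, the iid Bernoulli reduction via Poisson thinning with parameter $p=1-\exp\{-\lambda V\,\P\{f_1\ge q\}\}$, verification of (i)--(iii) for $H(k)=\lfloor k^\alpha\rfloor$, and the application of Theorem~\ref{thm:perc} with $n=1$---so there is nothing to compare.
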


\begin{definition}
\label{def:ufl}
Let now $d\ge l$, $2a \ge d+2l$ and define for $i\in I$
\begin{align*}
u_i(x) &:= \left\{ \begin{array}{ll} v(x-x_i) & \textrm{for $x\in [x_i-l-d/2,x_i+l+d/2]$} \\
+\infty & \textrm{otherwise.} \end{array}\right. \\
v_i(x) &:= v(x-x_i) \\
\ufl(x) &:= \min_{i\in I} v_i(x),
\end{align*}
where the $2a$-periodic function $v\colon \R \to \R$ is given in Definition~\ref{def:v}.
\end{definition}

\begin{remark}\label{rem:intersection}
Note that due to the monotonicity and periodicity properties\footnote{The function $v$ admits one maximum and one minimum in each period and it is strictly monotone in between.} of $v$ from Lemma~\ref{lem:monoticity}, we have that on any interval of the form $(b, b+2a]$ two functions $v_i$, $v_j$, $i,j\in I$ intersect each other exactly twice or they are identical. The points of intersection have distance $a$.
\end{remark}

\begin{proposition}\label{prop:welldefined}
With the definitions above we have that
\begin{enumerate}[\upshape (i)]
\item $\ufl$ is bounded and continuous,
\item given $x_i \in Q_k$, we have that $\ufl(x) = v_i(x)$ for all $x\in Q_k$.
\end{enumerate}
\end{proposition}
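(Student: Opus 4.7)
The plan is to prove (i) and (ii) separately, relying throughout on the fact that $v$ is bounded, smooth, $2a$-periodic, and strictly monotone between consecutive minimum and maximum.

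For (i), boundedness is immediate: $\|v\|_\infty<\infty$ by Lemma~\ref{lem:stand_sol}, and every $v_i$ is a translate of $v$, so $|\ufl(x)|\le\sup_{i\in I}|v_i(x)|\le\|v\|_\infty$. For continuity, I use that $v=\eta_{\delta/2}*\tilde v$ is $C^\infty$ and $2a$-periodic, hence uniformly continuous on $\R$ with some modulus $\omega$, and that every translate $v_i$ inherits the same modulus. The standard pointwise-infimum-of-equicontinuous-family argument
\[
|\ufl(x)-\ufl(y)|\le\sup_{i\in I}|v_i(x)-v_i(y)|\le\omega(|x-y|)
\]
then gives (uniform) continuity of $\ufl$.

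For (ii), I fix $x\in Q_k$ and let $i\in I$ be the unique index with $x_i\in\tilde Q_k\subset Q_k$ supplied by Proposition~\ref{prop:prop}; it suffices to show $v_i(x)\le v_j(x)$ for every $j\in I\setminus\{i\}$. Combining Lemma~\ref{lem:stand_sol}(ii) with Lemma~\ref{lem:monoticity}, $v$ is even, $2a$-periodic, and strictly increasing on $[0,a]$, so one can factor $v(y)=\bar v(r(y))$ with $r(y):=\mathrm{dist}(y,2a\Z)\in[0,a]$ and $\bar v:[0,a]\to\R$ strictly increasing. The inequality $v_i(x)\le v_j(x)$ thus reduces to $r(x-x_i)\le r(x-x_j)$. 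Since $|x-x_i|\le l$ and $l<a$ (because $2a\ge d+2l\ge 3l$), I get $r(x-x_i)=|x-x_i|\le l$. For $x_j\in Q_m$ with $m\neq k$, the column spacing yields $|x-x_j|\ge|m-k|(l+d)-l\ge d\ge l$, and I would finish by a case analysis on $|m-k|$ (the adjacent-column case being the binding one) showing that the assumption $2a\ge d+2l$ also prevents any periodic image of the minimum of $v_j$ from coming within distance $l$ of $x$, so $r(x-x_j)\ge l$ as well. Strict monotonicity of $\bar v$ then gives $v_i(x)\le v_j(x)$.

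The main obstacle is precisely this periodic-distance comparison on the torus $\R/(2a\Z)$: the naive Euclidean distance $|x-x_j|$ is comfortably bounded below by $d\ge l$ for $x_j$ in an adjacent column, but one still has to rule out that a wrapped-around copy of a minimum of $v_j$ sneaks within $l$ of $x$. The geometric conditions $d\ge l$ and $2a\ge d+2l$ are tailored to prevent this in the binding adjacent-column case, and farther columns then follow by the same reasoning with more slack. Once this torus-distance comparison is in hand, both parts of the proposition follow.
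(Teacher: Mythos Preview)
Your argument for (i) is correct and more explicit than the paper's ``obvious by construction''; the uniform-equicontinuity-of-translates observation is exactly what is needed.

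For (ii), your reduction to the torus-distance inequality $r(x-x_i)\le r(x-x_j)$ via the monotone profile $\bar v$ is the same geometric mechanism the paper's terse proof invokes (it merely records $d\ge l$ and that the period exceeds $d/2+l$). However, the step you yourself flag as ``the main obstacle'' is a genuine gap: your assertion that $2a\ge d+2l$ ``prevents any periodic image of the minimum of $v_j$ from coming within distance $l$ of $x$'' is false at the edge of the allowed parameter range. With $d=l$ and $2a=d+2l=3l$, take $x=-l/2\in Q_0$, $x_i=l/2-r_1\in\tilde Q_0$, and $x_j=5l/2-r_1\in\tilde Q_1$. Then $r(x-x_i)=l-r_1$, while the periodic image $x_j-2a=-l/2-r_1$ sits at distance $r_1$ from $x$, so $r(x-x_j)=r_1<l-r_1$ (recall $l>2r_1$), giving $v_j(x)<v_i(x)$ and hence $\ufl(x)<v_i(x)$. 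Thus the announced case analysis cannot be completed from the hypotheses $d\ge l$, $2a\ge d+2l$ alone; the paper's own one-sentence proof is equally silent on this wrapping issue. One clean way to close the gap is to argue with $\min_{i\in I} u_i$ in place of $\min_{i\in I} v_i$: the truncation of $u_j$ to $[x_j-l-d/2,\,x_j+l+d/2]$ forces $|x-x_j|\le l+d/2\le a$ whenever $u_j(x)$ is finite, so no periodic image ever enters the picture and your straight Euclidean comparison $|x-x_j|\ge d\ge l\ge|x-x_i|$ already suffices.
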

\begin{proof}
(i) is obvious by construction. In order to see (ii), note that the spacing $d$ between two boxes $Q_j$ is larger than the length $l$ of a box and the period of the function $v$ is larger than $d/2+l$.
\end{proof}

In the following, we prove that the function $\ufl$ constructed above is a supersolution to a modified problem where the  obstacles are extended from $-\infty$ to $+\infty$ in the $y$-direction. We thus fix $\xi \in \R$ and calculate the effect of the fractional Laplacian $\A$ on $\ufl$ evaluated at $\xi$. In the case that $\xi$ is a point where $\ufl$ is smooth, i.e., not a point where the minimizing $v_i$ in Definition~\ref{def:ufl} changes, we can directly apply the integral representation of $\A$. The points of discontinuity (of the first derivative) of $\ufl$ will have to be smoothed in order to construct a $C^\infty$ supersolution.

\begin{definition}
Given $\xi \in \R$, let $i_0 \in I$ such that $\ufl(\xi) = v_{i_0}(\xi) = u_{i_0}(\xi)$. If $v_j(\xi) = v_i(\xi) = \ufl(\xi)$ we take $i_0 = \max \{i,j\}$. Furthermore, we recursively define the points of intersection of the periodic supersolution $u_{i_0}$ with $\ufl$. Let
\begin{align*}
a_1 :=& \min\left\{ y\ge 0 : \exists \kappa >0 
	\;\textrm{with}\; \begin{array}{ll} u_{i_0} > \ufl & \textrm{on $(\xi-y-\kappa, \xi-y)$} \\ 
					                u_{i_0} \le \ufl & \textrm{on $(\xi-y, \xi-y+\kappa)$}  \end{array}    \right\}, \\
b_1 :=& \min\left\{ y\ge a_1 : \exists \kappa >0 
	\;\textrm{with}\; \begin{array}{ll} u_{i_0} < \ufl & \textrm{on $(\xi-y-\kappa, \xi-y)$} \\ 
					                u_{i_0} > \ufl & \textrm{on $(\xi-y, \xi-y+\kappa)$}  \end{array}    \right\}, \\		    
a_{k+1} :=& \min\left\{ y\ge b_k : \exists \kappa >0 
	\;\textrm{with}\; \begin{array}{ll} u_{i_0} > \ufl & \textrm{on $(\xi-y-\kappa, \xi-y)$} \\ 
					                u_{i_0} < \ufl & \textrm{on $(\xi-y, \xi-y+\kappa)$}  \end{array}    \right\}, \\
b_{k+1} :=& \min\left\{ y\ge a_{k+1} : \exists \kappa >0 
	\;\textrm{with}\; \begin{array}{ll} u_{i_0} < \ufl & \textrm{on $(\xi-y-\kappa, \xi-y)$} \\ 
					                u_{i_0} > \ufl & \textrm{on $(\xi-y, \xi-y+\kappa)$}  \end{array}    \right\}.
\end{align*}

Define $\tilde{a}_k, \tilde{b}_k$ in the same way by substituting $-y$ by $+y$

\end{definition}

\begin{lemma}\label{lem:distance}
We have
$$
b_k - a_k \ge a \quad \textrm{and} \quad a_{k+1} - b_k \le a
$$
for all $k\in\N$.
\end{lemma}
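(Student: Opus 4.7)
The plan is to exploit the intersection structure of Remark~\ref{rem:intersection}: distinct $v_i$ and $v_{i_0}$ cross at exactly two points in any period $2a$, separated by $a$, so the sign of $v_i - v_{i_0}$ is constant on every open interval of length $a$ between consecutive crossings. By the recursive definition, the interval $(\xi - b_k, \xi - a_k)$ is a ``non-minimizer region'' on which $v_{i_0} > \ufl$, while the interval $(\xi - a_{k+1}, \xi - b_k)$ is a ``minimizer region'' on which $v_{i_0} = \ufl$; the lemma asks that non-minimizer regions have length at least $a$ and minimizer regions at most $a$.

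For $b_k - a_k \ge a$, I would pick any $v_j$ that attains the minimum on a small right-neighborhood of $\xi - b_k$ (well defined because only finitely many $v_i$ are locally relevant to the min). Then $v_j < v_{i_0}$ immediately to the right of $\xi - b_k$, while on the adjacent minimizer region to the left one has $v_j \ge \ufl = v_{i_0}$. Continuity forces $v_j(\xi - b_k) = v_{i_0}(\xi - b_k)$, and the strict inequality on one side rules out $v_j \equiv v_{i_0}$, so Remark~\ref{rem:intersection} locates the next intersection precisely at $\xi - b_k + a$. Since the sign of $v_j - v_{i_0}$ cannot change between consecutive crossings, $v_j < v_{i_0}$ throughout $(\xi - b_k, \xi - b_k + a)$. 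Hence $\ufl \le v_j < v_{i_0}$ on this whole interval, placing it inside the non-minimizer region, so $\xi - a_k \ge \xi - b_k + a$, i.e., $b_k - a_k \ge a$.

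For $a_{k+1} - b_k \le a$, just to the left of $\xi - a_{k+1}$ we are in a non-minimizer region, so some $v_{j''}$ satisfies $v_{j''} < v_{i_0}$ there. The same continuity-plus-distinctness argument puts a crossing of $v_{j''}$ with $v_{i_0}$ at $\xi - a_{k+1}$, and Remark~\ref{rem:intersection} places the adjacent crossing at $\xi - a_{k+1} + a$. Between them $v_{j''} > v_{i_0}$, matching $v_{i_0} = \ufl \le v_{j''}$ on the adjacent minimizer region. Past $\xi - a_{k+1} + a$, however, the sign of $v_{j''} - v_{i_0}$ flips, so $v_{j''} < v_{i_0}$ on $(\xi - a_{k+1} + a, \xi - a_{k+1} + 2a)$, immediately ejecting $v_{i_0}$ from the minimizer role. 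The minimizer region $(\xi - a_{k+1}, \xi - b_k)$ therefore cannot extend past $\xi - a_{k+1} + a$, yielding $\xi - b_k \le \xi - a_{k+1} + a$, i.e., $a_{k+1} - b_k \le a$. The only delicate bookkeeping is keeping the direction of each sign change straight and checking that the chosen $v_j$ and $v_{j''}$ are genuinely distinct from $v_{i_0}$ so that Remark~\ref{rem:intersection} may be invoked.
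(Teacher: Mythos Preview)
Your argument is correct. For the first inequality $b_k-a_k\ge a$ you run essentially the paper's proof, only anchored at the left endpoint $\xi-b_k$ (with your $v_j$) rather than at the right endpoint $\xi-a_k$ (with the paper's $v_{i_1}$); in either case one obtains an interval of length $a$ on which some $v_j$ lies strictly below $v_{i_0}$, forcing the non-minimizer region to have length at least $a$.

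For the second inequality your route differs from the paper's. The paper argues by an \emph{explicit calculation}: it locates the nearby minimum $z_0$ of the periodic $u_{i_0}$, uses that crossings of two translates of $v$ occur at midpoints between their minima, and obtains the exact value
\[
a_{k+1}-b_k=\tfrac{1}{2}(x_{i_2}-x_{i_3})\le \tfrac{1}{2}(d+2l-2r_1)<a,
\]
where $i_2,i_3\in I$ are the minimizers at $\xi-b_k$ and $\xi-a_{k+1}$. Your argument is purely structural and parallel to the first part: you pick $v_{j''}$ crossing $v_{i_0}$ at $\xi-a_{k+1}$, use Remark~\ref{rem:intersection} to place the next crossing at $\xi-a_{k+1}+a$, and observe that beyond it $v_{j''}<v_{i_0}$, which terminates the minimizer region. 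Your approach is shorter and more symmetric with the first half; the paper's approach is more computational but yields the sharper strict bound and the precise value of $a_{k+1}-b_k$, which is, however, not needed for the lemma as stated.
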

\begin{proof}
Take $k\in \mathbb N$ arbitarily and $i_1,i_2\in I$ such that $v_{i_1}(\xi-a_k)=\ufl(\xi-a_k)$, $v_{i_2}(\xi-b_k)=\ufl(\xi-b_k)$ (it is clear by constrcution that $i_1$ and $i_2$ are unique).

Because of Remark~\ref{rem:intersection} and the construction of $\ufl$ one has
\begin{align*}
 \ufl &\leq u_{i_1} \text{ on } \left[\xi-a_k-a,\xi-a_k\right]. 
\end{align*}
The definition of $a_k$ and Remark~\ref{rem:intersection} yield
	\[
	u_{i_0}(\xi-a_k)=u_{i_1}(\xi-a_k) \text{ and } u_{i_0}(x)>u_{i_1}(x) \text{ for } x\in (\xi-a_k-a,\xi-a_k).
\]

Altogether one has
	\[
	u_{i_0}(x)>u_{i_1}(x)\geq \ufl(x) \quad \text{for all}\quad x\in (\xi-a_k-a,\xi-a_k).
\]
Furthermore
	\[
	\ufl\leq u_{i_0} \text{ on } \left[\xi-b_k,\xi-a_k\right]
\]
and by the choice of $a_k$ and $b_k$ there exists no larger interval $J\supseteq\left[\xi-b_k,\xi-a_k\right]$ with $\ufl\leq u_{i_0}$ on $J$. Using this one gets
	\[
	(\xi-a_k-a,\xi-a_k)\subset \left[\xi-b_k,\xi-a_k\right]
\]
and therefore
	\[
	b_k-a_k\geq a.
\]
The other inequality is shown by an explicit calculation. By construction of $\ufl$ it is obvious that for $I \ni i_3:=\min\left\{i\in I|i<i_2\right\}$ one has $v_{i_3}(\xi-a_{k+1})=\ufl(\xi-a_{k+1})$. Define $r:=2\left(x_{i_2}-\left(\xi-b_k\right)\right)$ and $z_0:=x_{i_2}-r$. Then by the periodicity property of $u_{i_0}$ and $u_{i_2}$ it is clear that $z_0$ is a minimum of $u_{i_0}$. Furthermore from the proof of Proposition~\ref{prop:welldefined} one knows that $v_{i_3}$ and $v_{i_2}$ intersect in $\frac{x_{i_3}+x_{i_2}}{2}$. The same argument also shows for the intersection of $u_{i_0}$ and $v_{i_3}$
\begin{align*}
\xi-a_{k+1}=\frac{x_{i_3}+z_0}{2}=\frac{x_{i_3}+x_{i_2}-r}{2}=\frac{x_{i_3}+x_{i_2}}{2}-\frac{r}{2}.
\end{align*}
By the choice of the $Q_k$ it follows $\abs{x_{i_2}-x_{i_3}}=x_{i_2}-x_{i_3}\leq d+2l-2r_1$ and therefore $\frac{\abs{x_{i_2}-x_{i_3}}}{2}\leq d/2+l-r_1<a$. Putting everything together one gets
\begin{align*}
	a_{k+1}-b_k=\left(\xi-b_k\right)-\left(\xi-a_{k+1}\right)=\frac{x_{i_2}+z_0}{2}-\frac{x_{i_3}+z_0}{2}=\frac{x_{i_2}-x_{i_3}}{2}<a
\end{align*}
See Figure~\ref{fig:estimate_periodic} for an illustration.
\begin{figure}
\begin{center}\resizebox{\textwidth}{!}{\input{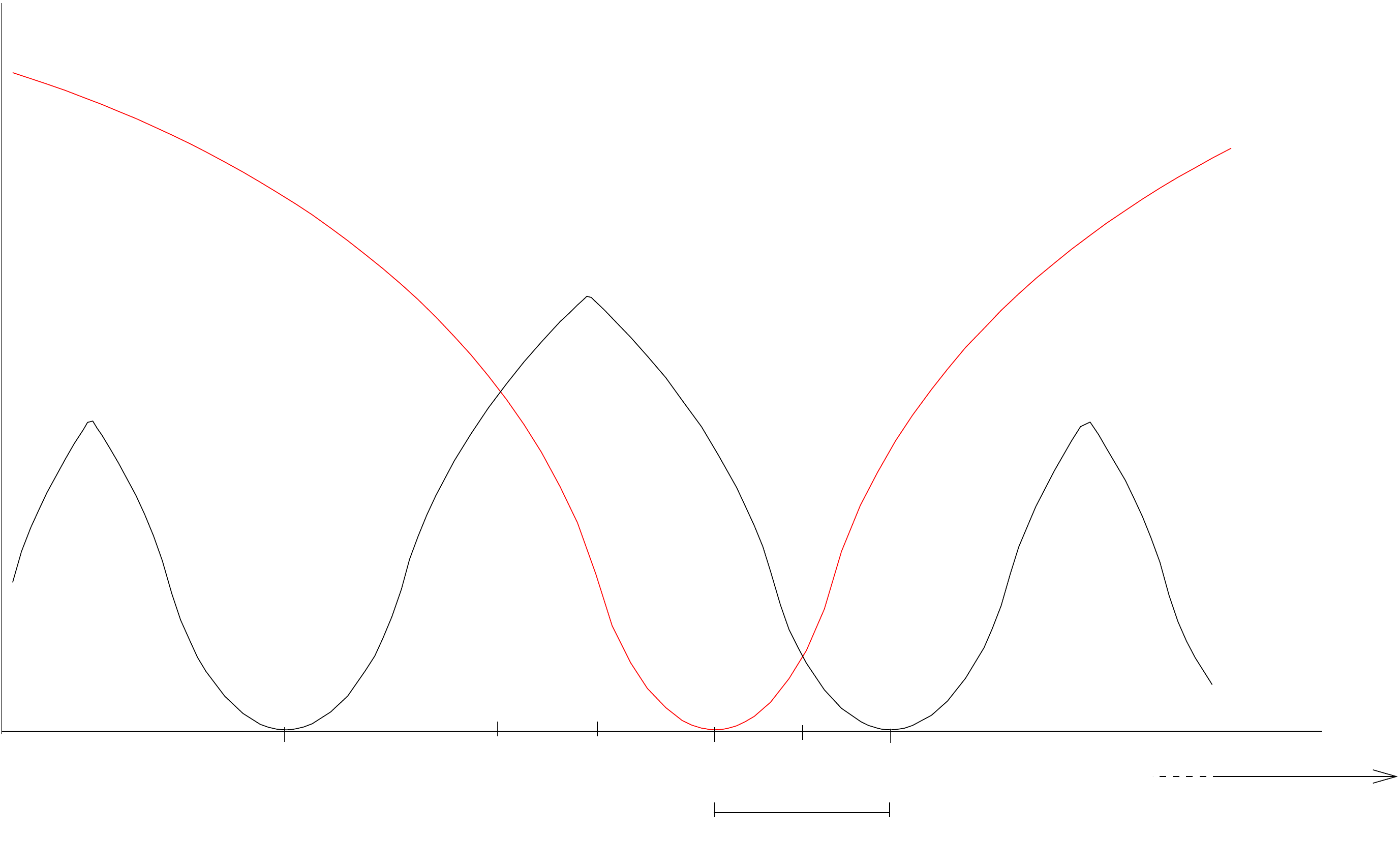_t}}\end{center}
\caption{Illustration to estimate the distances between intersection points of local supersolutions with the periodic supersolution $v_{i_0}$ centered around $x_0$.} \label{fig:estimate_periodic}
\end{figure}

\end{proof}

\begin{lemma}
\label{lem:Aufl_smooth}
Assume that $\xi$ is not a point of discontinuity of the first derivative of $\ufl$. We then have 
$$
\int_\R \frac{\ufl(y) - \ufl(\xi)}{ \abs{y-\xi}^{1+2s}} \,\dy \le \int_\R \frac{ u_{i_0}(y) - u_{i_0}(\xi) }{\abs{y-\xi}^{1+2s}} \,\dy.
$$
\end{lemma}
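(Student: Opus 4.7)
The proof hinges on two elementary facts: locally near $\xi$ the functions $\ufl$ and $u_{i_0}$ coincide, while globally we have the pointwise comparison $\ufl \le u_{i_0}$ on all of $\R$. Once these are in place, the inequality reduces to integrating a nonnegative quantity.

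First I would establish the local identification. By assumption $\xi$ is not a discontinuity of $\ufl'$, so the minimizer $i_0$ in the definition of $\ufl$ is unique in some open neighborhood $(\xi-\eta,\xi+\eta)$, i.e., $\ufl=v_{i_0}$ there. By Proposition~\ref{prop:welldefined}(ii), $\xi$ lies in a box $Q_k$ with $x_{i_0}\in Q_k$, and since $Q_k\subset[x_{i_0}-l-d/2,x_{i_0}+l+d/2]$, we have $u_{i_0}=v_{i_0}$ on that interval as well. Shrinking $\eta$ if necessary we obtain $\ufl\equiv u_{i_0}$ on $(\xi-\eta,\xi+\eta)$; in particular $\ufl(\xi)=u_{i_0}(\xi)$.

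Next I would verify the global comparison $\ufl(y)\le u_{i_0}(y)$ for every $y\in\R$. On $[x_{i_0}-l-d/2,x_{i_0}+l+d/2]$ one has $u_{i_0}(y)=v_{i_0}(y)\ge\min_{i\in I}v_i(y)=\ufl(y)$. Off this interval $u_{i_0}(y)=+\infty$, so the inequality is automatic (note $\ufl$ is bounded by Proposition~\ref{prop:welldefined}(i)).

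Combining these two steps with $\ufl(\xi)=u_{i_0}(\xi)$ gives, for every $y\neq\xi$,
\[
u_{i_0}(y)-u_{i_0}(\xi)=u_{i_0}(y)-\ufl(\xi)\ge\ufl(y)-\ufl(\xi).
\]
Dividing by $|y-\xi|^{1+2s}>0$ and integrating over $\R\setminus(\xi-\eta,\xi+\eta)$ preserves the inequality. On the remaining piece $(\xi-\eta,\xi+\eta)$ the two integrands coincide identically (by the first step), so the principal-value limits for $\ufl$ and for $u_{i_0}$ agree on that piece. Adding the two contributions yields the claimed inequality.

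The only point requiring care is the proper interpretation of the principal value at $\xi$, but because of the local identity $\ufl\equiv u_{i_0}$ near $\xi$ the singular contributions on the two sides are literally the same, so no analytic work beyond the standard existence of the P.V.\ for the smooth function $\ufl$ is needed. The remainder of the argument is just the pointwise bound $\ufl\le u_{i_0}$, so there is no substantial technical obstacle here; the lemma is essentially a monotonicity statement tailored for use in the subsequent supersolution estimate.
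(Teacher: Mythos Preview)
Your pointwise-comparison argument is correct and much shorter than the paper's. One caveat worth flagging: with $u_{i_0}$ read literally from Definition~\ref{def:ufl} (equal to $+\infty$ outside $[x_{i_0}-l-d/2,x_{i_0}+l+d/2]$), the right-hand integral is $+\infty$ and the lemma becomes vacuous. In the way the lemma is actually applied (the right-hand side is identified with $g(\xi-x_{i_0})$ and then bounded by $F_2$ or $-F_1$), $u_{i_0}$ must be read as the full $2a$-periodic function $v_{i_0}$. Your comparison carries over unchanged to that reading, since $\ufl=\min_{i\in I}v_i\le v_{i_0}$ on all of $\R$ and $\ufl(\xi)=v_{i_0}(\xi)$; integrating the pointwise inequality against the positive kernel $|y-\xi|^{-1-2s}$ then gives the result directly, the local identity near $\xi$ being needed only to make the principal values well defined.

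The paper takes a more elaborate route: it partitions $(-\infty,\xi-a_1]$ (and symmetrically $[\xi+\tilde a_1,\infty)$) into blocks $[\xi-a_{k+1},\xi-a_k]$ bounded by successive crossings of $u_{i_0}$ with $\ufl$, and shows via Lemma~\ref{lem:distance} and periodicity that on each block the positive and negative contributions to $\int(\ufl-u_{i_0})|y-\xi|^{-1-2s}\,\dy$ can be dominated by integrals of $u_{i_2}-u_{i_0}$ over one full period, which vanish. This block decomposition is unnecessary here precisely because the global pointwise bound $\ufl\le v_{i_0}$ already holds; the paper's approach would only be genuinely needed if one had an averaged rather than pointwise comparison.
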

\begin{proof}
We only consider the part of the integral to the left of $\xi$. With the definition of $\tilde{a}_k, \tilde{b}_k$, the estimate for the other part of the integral follows accordingly. Using the same notation as in Lemma~\ref{lem:distance}, one has $\ufl\leq u_{i_0}$ on $\left[\xi-b_k,\, \xi-a_k\right]$ and so the definition of $\ufl$ together with Lemma~\ref{lem:distance} yields
\begin{equation*}\label{eq:estimate_neg}
\begin{split}
&\quad\int_{\xi-b_k}^{\xi-a_k}{\ufl\left(y\right)-u_{i_0}\left(y\right)}\dy\\
&\leq\min\left\{\int_{\xi-b_k}^{\xi-b_k+a}{u_{i_2}(y)-u_{i_0}(y)}\dy,\int_{\xi-a_k-a}^{\xi-a_k}{u_{i_1}(y)-u_{i_0}(y)}\dy\right\}\\
&\leq \int_{\xi-b_k}^{\xi-b_k+a}{u_{i_2}(y)-u_{i_0}(y)}\dy \leq 0.
\end{split}
\end{equation*}
In the same way using $u_{i_0}\leq \ufl$ on $\left[\xi-a_{k+1},\, \xi-b_k\right]$ one gets
\begin{equation*}\label{eq:estiamte_pos}
\begin{split}
	0&\leq \int_{\xi-a_{k+1}}^{\xi-b_k}{\ufl\left(y\right)-u_{i_0}(y)}\dy\\
	&\leq \min\left\{\int_{\xi-a_{k+1}}^{\xi-a_{k+1}+a}{u_{i_3}(y)-u_{i_0}(y)}\dy, \int_{\xi-b_k-a}^{\xi-b_k}{u_{i_2}(y)-u_{i_0}(y)}\dy\right\}\\
	&\leq \int_{\xi-b_k-a}^{\xi-b_k}{u_{i_2}(y)-u_{i_0}(y)}\dy.
\end{split}
\end{equation*}
Using this one can split up $\int_{\xi-a_{k+1}}^{\xi-a_k}{\frac{\ufl\left(y\right)-u_{i_0}(y)}{\abs{y-\xi}^{1+2s}}}\dy$ in two parts where the integrand is negative and positive, respectively. Noticing that $\frac{1}{\abs{y-\xi}^{1+2s}}\leq \frac{1}{b_k^{1+2s}}$ on $\left[\xi-a_{k+1}, \xi-b_k\right]$ and $\frac{1}{b_k^{1+2s}}\leq \frac{1}{\abs{y-\xi}^{1+2s}}$ on $\left[\xi-b_k, \xi-a_k\right]$ one gets
\begin{align*}	\int_{\xi-a_{k+1}}^{\xi-a_k}{\frac{\ufl\left(y\right)-u_{i_0}(y)}{\abs{y-\xi}^{1+2s}}}\dy&\leq\frac{1}{b_k^{1+2s}}\int_{\xi-a_{k+1}}^{\xi-b_k}{\ufl\left(y\right)-u_{i_0}(y)}\dy\\
	&+\frac{1}{b_k^{1+2s}}\int_{\xi-b_k}^{\xi-a_k}{\ufl\left(y\right)-u_{i_0}(y)}\dy\\
	&\leq\frac{1}{b_k^{1+2s}}\int_{\xi-b_k-a}^{\xi-b_k+a}{u_{i_2}(y)-u_{i_0}(y)}\dy=0,
\end{align*}
where in the last step the peridicity of $u_{i_0}$ and $u_{i_2}$ was used. See Figure~\ref{fig:est_areas} for an illustration.
Inserting a zero in the form $-u_{i_0}\left(\xi\right)+u_{i_0}\left(\xi\right)$ one gets
	\[
	0\geq \int_{\xi-a_{k+1}}^{\xi-a_k}{\frac{\ufl\left(y\right)-u_{i_0}(\xi)}{\abs{y-\xi}^{1+2s}}}\dy-\int_{\xi-a_{k+1}}^{\xi-a_k}{\frac{u_{i_0}(y)-u_{i_0}(\xi)}{\abs{y-\xi}^{1+2s}}}\dy.
\]
Using now $u_{i_0}\left(\xi\right)=\ufl\left(\xi\right)$ and summing up for all $k$ in $\N$ it follows that
	\[
	\int_{-\infty}^{\xi-a_1}\frac{\ufl\left(y\right)-\ufl\left(\xi\right)}{\abs{y-\xi}^{1+2s}}\dy\leq \int_{-\infty}^{\xi-a_1}\frac{u_{i_0}\left(y\right)-u_{i_0}\left(\xi\right)}{\abs{y-\xi}^{1+2s}}\dy.
\]
Furthermore an analogous calculation shows that
	\[
	\int_{\xi+\widetilde{a}_1}^{\infty}\frac{\ufl\left(y\right)-\ufl\left(\xi\right)}{\abs{y-\xi}^{1+2s}}\dy\leq \int_{\xi+\widetilde{a}_1}^{\infty}\frac{u_{i_0}\left(y\right)-u_{i_0}\left(\xi\right)}{\abs{y-\xi}^{1+2s}}\dy,
\]
which together with $\ufl= u_{i_0}$ on $\left(\xi-a_1,\xi+\tilde{a}_1\right)$ yields

	\[
	\int_{\mathbb R}\frac{\ufl\left(y\right)-\ufl\left(\xi\right)}{\abs{y-\xi}^{1+2s}}\dy\leq \int_{\mathbb R}\frac{u_{i_0}\left(y\right)-u_{i_0}\left(\xi\right)}{\abs{y-\xi}^{1+2s}}\dy.
\]

\begin{figure}
\begin{center}\resizebox{\textwidth}{!}{\input{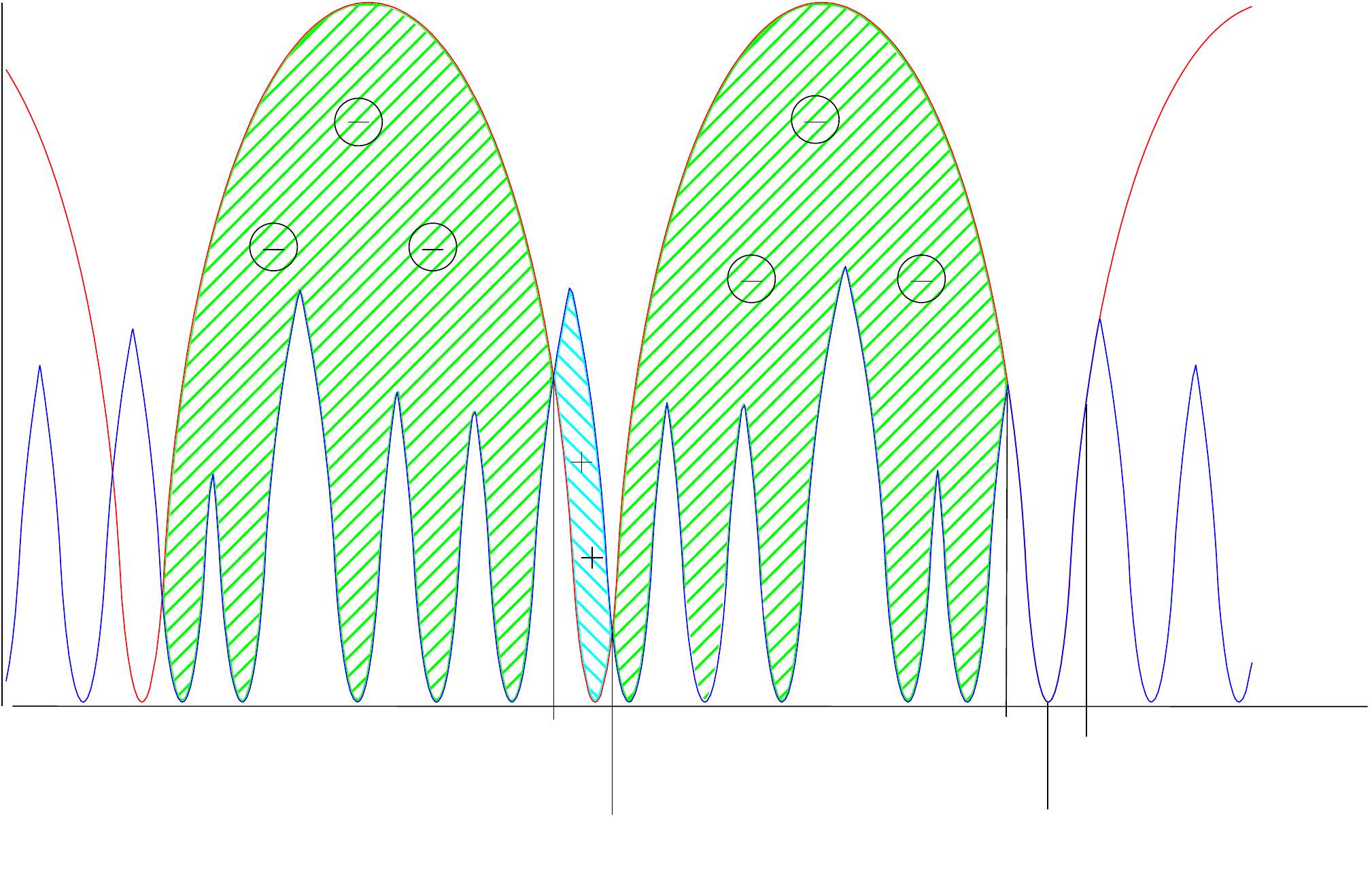_t}}\end{center}
\caption{} \label{fig:est_areas}
\end{figure}

\end{proof}

Next we show that the function $\ufl$ is a weak supersolution to the above mentioned modified problem.
\begin{lemma}
Let $0<\epsilon<\frac{r_0}{4}$, $b+\delta<r_0-2\epsilon$, $F_2<q$ and let $\gfl \colon \R \to [0,\infty)$ be given as
$$
\gfl(x) := \left\{ \begin{array}{ll} q & \textrm{for $x\in \bigcup_{i\in I} [x_i - r_0 + \frac{3}{2}\epsilon, x_i + r_0 - \frac{3}{2}\epsilon]$} \\[1mm]
0 &  \textrm{for $x\notin \bigcup_{i\in I} (x_i - r_0 + \frac{3}{2}\epsilon, x_i + r_0 - \frac{3}{2}\epsilon)$.} \end{array} \right.
$$
Then for all $\psi \in C_c^\infty(\R), \psi \ge 0$ we have 
$$
\left< \ufl, A\psi \right> + \left< -\gfl +F , \psi \right>:= \int_\R \ufl\, (A\psi) \,\dx + \int_\R (-\gfl+F)\, \psi \,\dx \le 0,
$$
for any $F \le \min\{q-F_2, F_1\}$.
\end{lemma}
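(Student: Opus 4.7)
My plan is to establish the pointwise almost-everywhere bound $\A\ufl(\xi) + F \le \gfl(\xi)$ and then pass to the weak formulation by integrating against $\psi \ge 0$, using the symmetry of $\A$. Let $\xi$ be a point at which $\ufl$ is differentiable; such $\xi$ form a co-null set, because the non-smooth points of $\ufl = \min_{i \in I} v_i$ lie among the pairwise crossings of distinct $v_i, v_j$, which by Remark~\ref{rem:intersection} form a discrete set. Let $i_0 = i_0(\xi)$ be the unique argmin in Definition~\ref{def:ufl}. Then Lemma~\ref{lem:Aufl_smooth}, combined with $\ufl(\xi) = v_{i_0}(\xi)$ and the identity $\A v = g$ from Definition~\ref{def:v}, gives
\[
\A\ufl(\xi) \;\le\; \A v_{i_0}(\xi) \;=\; g(\xi - x_{i_0}).
\]
The key geometric observation is $|\xi - x_{i_0}| < a$: if $\xi \in Q_k$ with $x_{i_0} \in \tilde Q_k$ this follows from Proposition~\ref{prop:welldefined}(ii) and $l < a$ (using $2a \ge d+2l$); otherwise $\xi$ lies in the gap between two boxes and, by Remark~\ref{rem:intersection} together with the sizing hypothesis, the argmin is attained at the nearer obstacle center, so that $|\xi - x_{i_0}| \le d/2 + l - r_1 < a$.

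I then split into two cases. If $\xi \in [x_i - r_0 + \tfrac{3}{2}\epsilon,\, x_i + r_0 - \tfrac{3}{2}\epsilon]$ for some $i \in I$, then $\gfl(\xi) = q$; since $F_2 = \max g$ (because $g = \tilde g \ast \eta_{\delta/2}$ with $\tilde g \le F_2$ pointwise and $\eta_{\delta/2} \ge 0$), one has $g(\xi - x_{i_0}) \le F_2$ and therefore
\[
\A\ufl(\xi) + F \;\le\; F_2 + F \;\le\; q \;=\; \gfl(\xi),
\]
whenever $F \le q - F_2$. Otherwise $\gfl(\xi) = 0$ and $|\xi - x_i| > r_0 - \tfrac{3}{2}\epsilon$ for every $i \in I$, in particular for $i_0$; together with $b + \delta < r_0 - 2\epsilon$ and $|\xi - x_{i_0}| < a$, this places $\xi - x_{i_0}$ in the region where $g \equiv -F_1$ per Definition~\ref{def:g}, so $\A\ufl(\xi) + F \le -F_1 + F \le 0 = \gfl(\xi)$ whenever $F \le F_1$.

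Finally, to transfer the pointwise a.e.\ bound to the weak formulation, I will use the symmetry identity $\langle \ufl, \A\psi\rangle = \langle \A\ufl, \psi\rangle$. The cleanest justification is by mollification: $\ufl^\varepsilon := \ufl \ast \eta_\varepsilon$ is smooth, so the classical identity $\int \ufl^\varepsilon\, \A\psi\,\dx = \int \A\ufl^\varepsilon\, \psi\,\dx$ holds by Fubini on the symmetric kernel; its left-hand side converges to $\int \ufl\, \A\psi\,\dx$ by uniform convergence $\ufl^\varepsilon \to \ufl$ (continuity of $\ufl$) together with $\A\psi \in L^1(\R)$ (which decays like $|x|^{-(1+2s)}$), while on the right $\A\ufl^\varepsilon = (\A\ufl) \ast \eta_\varepsilon \to \A\ufl$ in $L^1_{\mathrm{loc}}$ since $\A\ufl$ is locally integrable (bounded off the discrete kink set). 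Integrating the pointwise bound against $\psi \ge 0$ then delivers the stated inequality. The main obstacle I anticipate is precisely this last step: for $s \ge 1/2$ the integral representation of $\A\ufl$ may fail to converge pointwise at the isolated kinks of $\ufl$, but these form a null set, and either the mollification argument above or an equivalent direct computation with the symmetric bilinear form $B(u,v) = \tfrac{C_s}{2}\iint \frac{(u(x)-u(y))(v(x)-v(y))}{|x-y|^{1+2s}}\,\dx\,\dy$, well-defined for $u = \ufl$ bounded continuous and $v = \psi \in C_c^\infty$ and satisfying $\langle \ufl, \A\psi \rangle = -B(\ufl, \psi)$, circumvents the issue.
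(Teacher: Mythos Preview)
Your pointwise argument at smooth points is the same as the paper's: both invoke Lemma~\ref{lem:Aufl_smooth} to get $\A\ufl(\xi)\le g(\xi-x_{i_0})$ and then split cases according to whether $\xi$ sits in the obstacle core or not. The geometric bound $|\xi-x_{i_0}|\le l+d/2\le a$ is in fact already encoded in the choice of $i_0$ (one requires $u_{i_0}(\xi)<\infty$), so your two-case discussion is fine.

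Where you and the paper diverge is in the passage from the a.e.\ pointwise bound to the weak inequality. The paper isolates each kink with a partition of unity and writes $\ufl$ locally as a piecewise affine function (carrying the downward corner) plus a $C^1\cap H^2_{\mathrm{loc}}$ remainder; since the derivative jump is negative, $\A$ applied to the affine piece is \emph{strongly negative} near the corner, which more than compensates the bounded remainder. You instead argue by mollification and the symmetry of $\A$, relying on $\A\ufl\in L^1_{\mathrm{loc}}$. This is legitimate, but your justification ``bounded off the discrete kink set'' is not: for $s\ge 1/2$ the integral representation gives $\A\ufl(\xi)\sim -c\,|\xi-\xi_0|^{1-2s}$ near a kink $\xi_0$ (a downward corner produces a negatively divergent contribution), so $\A\ufl$ is unbounded there. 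It is nonetheless locally integrable for every $s<1$, which is what your mollification step actually needs; once you state and use that, the argument goes through. Your alternative via the symmetric bilinear form $B(\ufl,\psi)$ is in fact the cleanest route, since $\ufl$ is globally Lipschitz (each $v_i$ shares the Lipschitz constant $\|v'\|_\infty$) and bounded, making $B(\ufl,\psi)$ absolutely convergent and the identity $\langle\ufl,\A\psi\rangle=\langle\A\ufl,\psi\rangle$ immediate without analysing the blow-up rate.

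In short: the core of your proof matches the paper; your treatment of the kinks is a genuine alternative to the paper's decomposition, and it works, but you should replace the ``bounded'' claim by the correct $L^1_{\mathrm{loc}}$ statement (or simply run the bilinear-form version you already sketched).
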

\begin{proof}
If the support of $\psi$ does not contain any points of discontinuity of $\ufl'$, the statement is clear from Lemma~\ref{lem:Aufl_smooth} by noting that the integral representation of $A\ufl$ is well defined and finite on the whole support of $\psi$ and one can thus apply the fractional Laplacian directly to $\ufl$. A calculation then yields 
\begin{align*}
&	\int_{\mathbb R}\frac{\ufl\left(y\right)-\ufl\left(x\right)}{\left|y-x\right|^{1+2s}}\, \dy-\gfl \left(x\right) \\ 
&	\quad\leq \int_{\mathbb R}\frac{u_{i\left(x\right)}\left(y\right)-u_{i\left(x\right)}\left(x\right)}{\abs{y-x}^{1+2s}}\, \dy-\gfl\left(x\right) \\
&	\quad\leq	\begin{cases}
	F_2-\gfl \left(x\right) & \text{for } x\in \left[x_{i\left(x\right)}-b-\delta,\, x_{i\left(x\right)}+b+\delta\right]\\
	-F_1-\gfl \left(x\right)& \text{for } x\in \left[-a,\, a\right]\backslash \left[x_{i\left(x\right)}-b-\delta,\, x_{i\left(x\right)}+b+\delta\right]
	\end{cases} \\
&	\quad\leq -\min\left\{q-F_2,\, F_1\right\}.
\end{align*}
Using a partition of unity, one can isolate any points of discontinuity. It is possible to locally split $\ufl$ into a piecewise affine function with the same jump and a $C^1\cap H^2_{loc}$-function with bounded second derivatives. Noting that the first derivative of $\ufl$ always admits a \emph{negative} jump, the integral operator $A$ applied to those two parts yields a strongly negative term near the jump for the piecewise affine function and a bounded term for the $C^1\cap H^2_{loc}$-function. The statement in the lemma is thus proved for all test functions $\psi$.
\end{proof}

It is now possible to mollify the function $\ufl$ by a standard mollifier of radius $\epsilon$ to obtain a smooth classical supersolution to the modified (flat) problem.
\begin{corollary}\label{cor:est_flat_smooth}
Let $\usm := \eta_\epsilon * \ufl \in C^\infty$ and $\gsm := \eta_\epsilon * \gfl$. We then have
$$
\A\usm - \gsm +F \le 0
$$ 
for all $F \le \min\{q-F_2, F_1\}$.
\end{corollary}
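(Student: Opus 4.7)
My plan is to derive the pointwise classical inequality from the weak inequality just established by a standard duality-mollification argument: the conclusion is essentially the weak inequality tested against translates of the mollifier $\eta_\epsilon$ itself. Smoothness of $\usm$ is immediate because $\ufl$ is bounded and continuous and $\eta_\epsilon \in C_c^\infty$, so the entire content of the corollary is the pointwise bound.

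Concretely, for a fixed $x_0 \in \R$ I would apply the preceding lemma with the admissible (nonnegative, $C_c^\infty$) test function $\psi(y) := \eta_\epsilon(x_0 - y)$. Using symmetry of $\eta_\epsilon$ together with translation invariance of $\A$ gives $(\A\psi)(y) = (\A\eta_\epsilon)(x_0 - y)$, and therefore
$$
\langle \ufl, \A\psi\rangle \;=\; (\ufl * \A\eta_\epsilon)(x_0) \;=\; \A(\ufl * \eta_\epsilon)(x_0) \;=\; (\A\usm)(x_0),
$$
where the middle equality is the commutation of $\A$ with convolution. Analogously $\langle \gfl, \psi\rangle = \gsm(x_0)$ and $\langle F, \psi\rangle = F$ because $\eta_\epsilon$ integrates to one. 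Substituting into the weak inequality $\langle \ufl, \A\psi\rangle + \langle -\gfl + F, \psi\rangle \le 0$ yields exactly $(\A\usm)(x_0) - \gsm(x_0) + F \le 0$; since $x_0$ was arbitrary, this is the assertion.

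The only genuinely technical step is the commutation $\A(\ufl * \eta_\epsilon) = \ufl * \A\eta_\epsilon$. A Fourier-multiplier argument is not directly available because $\ufl$ is only bounded (not integrable), so I would justify the identity via the singular-integral representation of $\A$ and Fubini. The exchange is legitimate since $\A\eta_\epsilon(z)$ decays like $(1+|z|)^{-1-2s}$ at infinity (being the fractional Laplacian of a Schwartz function), so its convolution against the bounded function $\ufl$ is absolutely convergent, while near the singularity the usual principal-value cancellation is available because $\usm$ is $C^\infty$. This bookkeeping is the main (and only) obstacle; otherwise the corollary is a clean consequence of the weak inequality and the smoothness of the mollifier.
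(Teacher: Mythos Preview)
Your argument is correct and is precisely the standard mollification/duality step that the paper leaves implicit (the corollary is stated without proof, as an immediate consequence of the weak inequality). Testing the weak inequality against $\psi(\cdot)=\eta_\epsilon(x_0-\cdot)$ and using symmetry of $\eta_\epsilon$ together with the commutation $\A(\ufl*\eta_\epsilon)=\ufl*\A\eta_\epsilon$ is exactly the intended route, and your justification of the commutation via the $|z|^{-1-2s}$ decay of $\A\eta_\epsilon$ is adequate.
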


\begin{proposition}\label{Stufenfunktion}
Let $h>0$, $d>0$, $l>0$, $s\in \left[1/2,1\right)$. For $\Lambda: \mathbb Z\rightarrow \mathbb R$ such that $\abs{\Lambda\left(z_1\right)-\Lambda\left(z_2\right)}\leq 2h\abs{z_1-z_2}^{\alpha}$ with $0<\alpha<1$ there exist a smooth function $\ustep:\, \mathbb R\rightarrow \mathbb R $ and constants $C_0$, $C_1$ and $C_2$, that only depend on $s$ and $\alpha$ such that
\begin{enumerate}[\upshape (i)]
	\item $\ustep(x)=\Lambda(k)$ for any $x\in Q_k$, $k\in\mathbb Z$ \label{piecewiseconstant}
	\item $\left\|\partial_{x}^2\ustep\right\|_{\infty}\leq C_0 \frac{h}{d^2}$  \label{sec_der_est}
	\item 
$	\left|\left(-\Delta\right)^{s}\ustep(x)\right|\leq    C_1\frac{\left(d/2+l/2\right)^{2-2s}}{d^2}h+C_2\frac{h}{\left(d/2+l/2\right)^{2s}}$.
\end{enumerate}
\end{proposition}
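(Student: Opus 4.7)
The plan is to define $\ustep$ piecewise: on each box $Q_k$ set $\ustep\equiv\Lambda(k)$, and on the intervening gap $G_k:=[k(l+d)+l/2,(k+1)(l+d)-l/2]$ of width $d$ interpolate smoothly using a fixed transition profile $\eta\in C^\infty(\R,[0,1])$ with $\eta\equiv 0$ on $(-\infty,0]$ and $\eta\equiv 1$ on $[1,\infty)$, by
\[
\ustep(x)=\Lambda(k)+(\Lambda(k+1)-\Lambda(k))\,\eta\bigl((x-k(l+d)-l/2)/d\bigr),\qquad x\in G_k.
\]
Since $\eta$ and all its derivatives vanish at the endpoints of $[0,1]$, the resulting function is globally $C^\infty$, which yields (i). The H\"older hypothesis applied to consecutive indices gives $|\Lambda(k+1)-\Lambda(k)|\le 2h$, and the chain rule immediately yields $\|\partial_x^2\ustep\|_\infty\le 2\|\eta''\|_\infty\,h/d^2$, so (ii) holds with $C_0=2\|\eta''\|_\infty$.

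For (iii), I would use the symmetric second-difference representation of $(-\Delta)^s$ against $y^{-(1+2s)}$ and split the integral at the natural length scale $R:=(d+l)/2=d/2+l/2$. On the near-field $(0,R]$ the Taylor bound $|\ustep(x+y)+\ustep(x-y)-2\ustep(x)|\le \|\partial_x^2\ustep\|_\infty\,y^2$ combined with (ii) reduces the contribution to a constant multiple of $(h/d^2)\int_0^R y^{1-2s}\,\dy=(h/d^2)\,R^{2-2s}/(2-2s)$, which in the borderline case $s=1/2$ reads $hR/d^2$; this produces the first term on the right-hand side of (iii).

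For the far-field $(R,\infty)$ the H\"older hypothesis on $\Lambda$ must first be transferred to $\ustep$: any $x_1,x_2\in\R$ lie in or adjacent to boxes whose indices differ by at most $|x_1-x_2|/(l+d)+1$, and the transition on each gap is monotone between consecutive box values, so
\[
|\ustep(x_1)-\ustep(x_2)|\le 2h\bigl(|x_1-x_2|/(l+d)+1\bigr)^\alpha,
\]
which for distances exceeding $R=(l+d)/2$ is dominated by $C_\alpha h(|x_1-x_2|/(l+d))^\alpha$. Inserting this into the tail integral and using $\alpha<1\le 2s$ to secure integrability gives an estimate proportional to $(h/(l+d)^\alpha)\,R^{\alpha-2s}/(2s-\alpha)$; since $(l+d)^\alpha=(2R)^\alpha$, this collapses to $C_{s,\alpha}\,h R^{-2s}$, matching the second term of (iii).

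The main obstacle is this far-field step: one has to ensure that the passage from the discrete H\"older bound on $\Lambda$ to a continuous bound on $\ustep$ introduces only constants depending on $s$ and $\alpha$ (the transition profile contributes only a bounded Lipschitz factor on each individual gap), and to verify that the exponent arithmetic produces exactly $R^{-2s}$ after combining $(l+d)^{-\alpha}$ with $R^{\alpha-2s}$, so that the final constants $C_1,C_2$ are free of the geometric parameters $d,l,h$.
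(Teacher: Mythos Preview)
Your proposal is correct and follows essentially the same strategy as the paper's proof: split the singular integral at a radius comparable to $R=(d+l)/2$, control the near-field by the second-derivative bound from (ii), and control the far-field by transferring the discrete H\"older estimate on $\Lambda$ to a continuous one on $\ustep$ of the form $|\ustep(x)-\ustep(y)|\lesssim h\bigl(|x-y|/(l+d)\bigr)^\alpha$ for $|x-y|\gtrsim R$. The only cosmetic differences are that the paper constructs $\ustep$ by mollifying a step function rather than via an explicit transition profile, and it splits at $3R$ instead of $R$ (which slightly simplifies the verification of the far-field H\"older bound but changes only the constants).
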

\begin{proof}
Parts (i)-(ii) are immediate by mollifying a piecewise constant function. Part (iii) can be seen as follows.
Without loss of generality one can assume $x\in \left[-d/2-l/2,d/2+l/2\right]$.\\
Let $\Pi:=\left(-3\left(d/2+l/2\right),\, 3\left(d/2+l/2\right)\right)$, then one has from the assumptions on $\Lambda$ and from~\eqref{piecewiseconstant} for all $y\in \mathbb R\backslash \Pi$

\begin{align}\label{eq:est_step}
\abs{\ustep\left(x\right)-\ustep\left(y\right)}\leq 6h\frac{\abs{x-y}^{\alpha}}{\left(l+d\right)^{\alpha}}.
\end{align}

because
\begin{align*}
\abs{\ustep\left(x\right)-\ustep\left(y\right)}\leq 2h\frac{\abs{x-y}^{\alpha}}{\left(l+d\right)^{\alpha}}+4h
\end{align*}
but as $y\in \mathbb R\backslash \Pi$ one has $\frac{\abs{x-y}^{\alpha}}{\left(l+d\right)^{\alpha}}\geq 1$ and therefore $4h\leq 4h\frac{\abs{x-y}^{\alpha}}{\left(l+d\right)^{\alpha}}$.

Using that $\ustep$ is smooth and grows less than linear, $\left(-\Delta\right)^s\ustep$ can be a represented as
\begin{align*}
\left(-\Delta\right)^s\ustep\left(x\right)=\frac{1}{2}\int_{\mathbb R}{\frac{\ustep\left(x-y\right)+\ustep\left(x+y\right)-2\ustep\left(x\right)}{\left|y\right|^{1+2s}}}\dy
\end{align*}
by applying some standard estimates~\cite{DiNezza:2011va} one gets

\begin{align*}
&\quad\left|\left(-\Delta\right)^{s}\ustep(x)\right|=\frac{1}{2}\left|\int_{\mathbb R}{\frac{\ustep\left(x-y\right)+\ustep\left(x+y\right)-2\ustep\left(x\right)}{\left|y\right|^{1+2s}}}\dy\right|\\
&\leq\frac{\left\|\partial_x^2\ustep\right\|_{\infty}}{2}\int_{\Pi}{\frac{1}{\left|y\right|^{2s-1}}}\dy\\
&+\frac{1}{2}{\int_{\mathbb R\backslash\Pi}{\frac{\left|\ustep\left(x-y\right)-\ustep\left(x\right)\right|+\left|\ustep\left(x+y\right)-\ustep\left(x\right)\right|}{\left|y\right|^{1+2s}}}\dy}.
\end{align*}
Putting in estimate~\eqref{eq:est_step} for $\ustep$ and calculating the resulting integals it follows that
\begin{align*}
&\quad\left|\left(-\Delta\right)^{s}\ustep(x)\right| \\
&\leq \left\|\partial_x^2\ustep\right\|_{\infty}\int_0^{3\left(d/2+l/2\right)}{\frac{1}{y^{2s-1}}}\dy+{\frac{6h}{\left(l+d\right)^{\alpha}}\int_{\mathbb R\backslash\Pi}{\frac{1}{\left|y\right|^{1+2s-\alpha}}}\dy}\\
&\leq \frac{\left\|\partial_x^2\ustep\right\|_{\infty}}{2-2s}\,3^{2-2s}\left(d/2+l/2\right)^{2-2s}+{\frac{12h}{\left(l+d\right)^{\alpha}}\int_{3\left(d/2+l/2\right)}^{\infty}{y^{-1-2s+\alpha}}\dy}\\
&\leq \frac{\left\|\partial_x^2\ustep\right\|_{\infty}}{2-2s}\,3^{2-2s}\left(d/2+l/2\right)^{2-2s}+\frac{12h}{2s-\alpha}3^{2s-\alpha}\frac{\left(d/2+l/2\right)^{-2s+\alpha}}{\left(d+l\right)^{\alpha}}\\
&\leq \frac{3^{2-2s}}{2-2s}C_0\frac{h}{d^2}\left(d/2+l/2\right)^{2-2s}+\frac{12}{2s-\alpha}\frac{3^{2s-\alpha}}{2^{\alpha}}h\left(d/2+l/2\right)^{-2s}
\end{align*}

where in the last step $\left\|\partial_{x}^2\ustep\right\|_{\infty}\leq C_0 \frac{h}{d^2}$ was used. For
	\[
	C_1=\frac{3^{2-2s}}{2-2s}C_0 \quad \text{and} \quad C_2=\frac{12}{2s-\alpha}\frac{3^{2s-\alpha}}{2^{\alpha}}
\]
one obtains the estimate.
\end{proof}

\begin{lemma}\label{scaling_1}
Let $s>1/2$ and take $\Cv:=\frac{2}{\pi^{2s}}\zeta\left(2s\right)$ and $0<\Cb<1$, $\Ca>5$.\\
Take $q>0$ and $V>0$ as in Proposition~\ref{prop:prop}. Choose $0<F_2<q$ and take $F_1>0$ as in Definition~\ref{def:g}. Choose now $l>0$ such that
\begin{align*}
l>\max\left\{\, 4r_1,\, \left(\frac{\left(C_1+C_2\right)V}{r_1\left(q-F_2\right)}\right)^{1/\left(2s\right)},\, \left(12\left(C_1+C_2\right)Vr_0\right)^{1/\left(2s\right)} , \right.\\
 \frac{1+2F_2r_0r_1+12F_2\left(C_1+C_2\right)V\Cv\Ca^{2s}}{F_2r_0},\, \left.\frac{1}{\left(\Cv\left(\frac{2}{3}\right)^{2s-1}F_2\right)^{1/\left(2s-1\right)}}\right\}
\end{align*}.
Take $d=l$, $\frac{3}{2}l=\frac{d}{2}+l\leq a\leq \Ca l$, $b=\frac{1}{6}\frac{ar_0}{\left(\Cv F_2a^{2s}+r_0\right)}$, $0<\delta < \Cb b$
and $h=\frac{V}{l-2r_1}$. Choose $0<\epsilon<\frac{r_0}{4}$ such that $b+\delta<r_0-2\epsilon$ (possible due to item (\ref{it:rho})). Then we have
\begin{enumerate}[\upshape (i)]
	\item $\rho=b+\delta/2<\frac{r_0}{3}<\frac{a}{18}$, \label{it:rho}
	\item $\left(C_1+C_2\right)V\frac{1}{l^{2s}}\frac{1}{l-2r_1}<\frac{q-F_2}{2}$,
	\item $\left(C_1+C_2\right)V\frac{1}{l^{2s}}\frac{1}{l-2r_1}<\frac{1}{12}\frac{r_0}{\left(\Cv\Ca^{2s}F_2l^{2s}+r_0\right)}$,
	\item $\left\|v\right\|_{\infty}\leq \frac{2\left(F_1+F_2\right)}{\pi^{2s}}\zeta\left(2s\right)a^{2s-1}\left(b+\delta/2\right)\\=\Cv\left(F_1+F_2\right)a^{2s-1}\left(b+\delta/2\right)<\frac{r_0}{2}$,
	\item $-F_1\leq -\frac{1}{6}\frac{r_0F_2}{\left(\Cv\Ca^{2s}F_2 l^{2s}+r_0\right)}$.
\end{enumerate}
\end{lemma}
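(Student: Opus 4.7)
The plan is to verify items (i) through (v) in turn; the lemma is essentially a bookkeeping assertion that the many lower bounds imposed on $l$ (together with the chosen relations between $b$, $d$, $h$, $\delta$, and $F_1$) have been tailored so that each inequality holds. Most of the argument consists of matching each conclusion to the specific lower bound on $l$ that was introduced to handle it, followed by elementary algebra, so I will emphasize the matching and the key cancellations rather than grinding through numerical constants.

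For (i), I would begin by observing that $\rho = b + \delta/2 < 3b/2$, since $\delta < \Cb b < b$. Plugging $b = ar_0/\bigl(6(\Cv F_2 a^{2s} + r_0)\bigr)$ into $3b/2 < r_0/3$ reduces, after cross-multiplication, to an inequality of the form $a^{2s-1} > \mathrm{const}/(\Cv F_2)$; since $a \geq 3l/2$, this is supplied by the last lower bound on $l$, namely $l > (\Cv (2/3)^{2s-1} F_2)^{-1/(2s-1)}$. The second half, $r_0/3 < a/18$ (that is, $a > 6r_0$), is immediate from $a \geq 3l/2 > 6r_1 > 6\sqrt{2}\, r_0$, using $l > 4r_1$ and the standing hypothesis $r_1 > \sqrt{2}\, r_0$.

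Items (ii) and (iii) are bounds of the same shape, differing only in what they are compared to. Item (ii) follows at once from the bound $l > \bigl((C_1+C_2)V/(r_1(q-F_2))\bigr)^{1/(2s)}$: this gives $(C_1+C_2)V/l^{2s} < r_1(q-F_2)$, and dividing by $l - 2r_1 > 2r_1$ (from $l > 4r_1$) yields the stated estimate. Item (iii) requires two lower bounds acting together: the $r_0$-term in the denominator on the right-hand side is controlled by $l > (12(C_1+C_2)Vr_0)^{1/(2s)}$, while the $\Cv \Ca^{2s} F_2 l^{2s}$-term is controlled by the long bound $l > (1 + 2F_2r_0r_1 + 12 F_2(C_1+C_2)V\Cv\Ca^{2s})/(F_2 r_0)$, which rearranges to $(l - 2r_1)F_2 r_0 > 1 + 12 F_2(C_1+C_2)V\Cv\Ca^{2s}$. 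Splitting the cross-multiplied form of (iii) additively into these two pieces and matching each to its bound yields the claim; this is the main step where one has to be careful with constants. For (v), the identity $F_1 = \rho F_2/(a-\rho)$ combined with $\rho \geq b$ and $a-\rho < a \leq \Ca l$ gives $F_1 \geq bF_2/(\Ca l)$, and substituting the definition of $b$ produces the stated lower bound.

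Finally, for (iv) the first inequality is precisely Lemma~\ref{lem:stand_sol}\,(iii). For the second, use the algebraic identity $F_1 + F_2 = aF_2/(a-\rho)$ to write $\Cv(F_1 + F_2)\, a^{2s-1}\rho = \Cv F_2 \rho\, a^{2s}/(a-\rho)$; item (i) gives $a - \rho > 17a/18$, and $\rho < 3b/2$, so this expression is bounded by a numerical constant times $\Cv F_2\, b\, a^{2s-1}$. Substituting $b = ar_0/\bigl(6(\Cv F_2 a^{2s} + r_0)\bigr)$ makes the factor $\Cv F_2 a^{2s}$ cancel and leaves a constant times $r_0$, which a short numerical check shows is strictly less than $r_0/2$. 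I expect the honest comparison of absolute constants in items (iii) and (iv) to be the only point where the argument requires real care; everything else is a direct unwinding of definitions against the stated lower bounds on $l$.
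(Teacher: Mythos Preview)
Your approach matches the paper's almost line for line: each item is handled by isolating the relevant lower bound on $l$ and unwinding definitions, and the organization (items (ii) and (iii) as the ``same shape'', the cancellation of $\Cv F_2 a^{2s}$ in (iv) via the formula for $b$, etc.) is exactly what the paper does.

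There is one small slip in your treatment of (v). You bound $a-\rho$ by $\Ca l$ and obtain $F_1 \ge bF_2/(\Ca l)$; after substituting $b = \tfrac{1}{6}\,ar_0/(\Cv F_2 a^{2s}+r_0)$ this leaves an uncancelled factor $a/(\Ca l)\le 1$, and for $a$ near $3l/2$ and $s$ close to $1/2$ the resulting bound can in fact fall short of the target $\tfrac{1}{6}\,r_0F_2/(\Cv\Ca^{2s}F_2 l^{2s}+r_0)$. The paper instead bounds $a-\rho$ only by $a$, so that the $a$ in the numerator of $b$ cancels cleanly:
\[
F_1=\frac{\rho}{a-\rho}F_2 \ge \frac{b}{a}F_2=\frac{1}{6}\,\frac{r_0F_2}{\Cv F_2 a^{2s}+r_0}\ge \frac{1}{6}\,\frac{r_0F_2}{\Cv\Ca^{2s}F_2 l^{2s}+r_0},
\]
the last step using $a\le \Ca l$ only on the $a^{2s}$ term. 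With this adjustment your proof of (v) goes through, and everything else is correct as written.
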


\begin{proof}
\begin{enumerate}[\upshape (i)]
	\item The choice of $b$, $\delta$, $a$ and $l$ yields
\begin{align*}
b+\delta/2 &\leq2b=\frac{1}{3}\frac{ar_0}{\Cv F_2a^{2s}+r_0}<\frac{1}{3}\frac{r_0}{\Cv F_2a^{2s-1}}\\
&\leq \frac{1}{3}\frac{r_0}{\Cv F_2 \left(\frac{2}{3}\right)^{2s-1}l^{2s-1}}\\
& < \frac{r_0}{3} \frac{1}{\Cv F_2 \left(\frac{2}{3}\right)^{2s-1}\left(\frac{1}{\Cv F_2 \left(\frac{2}{3}\right)^{2s-1}}\right)}\\
&=\frac{r_0}{3}
\end{align*}
from the conditions on $r_0$, $r_1$ and the choice of $l$ one also gets
\begin{align*}
\frac{r_0}{3}<\frac{r_1}{3}<\frac{l}{12}\leq \frac{2}{3}\frac{a}{12}=\frac{a}{18}.
\end{align*}
	\item From the conditions on $l$ we get
	\begin{align*}
	l&>\left(\frac{2\left(C_1+C_2\right)V}{2r_1\left(q-F_2\right)}\right)^{\frac{1}{2s}}\\
	l^{2s}&> \frac{2\left(C_1+C_2\right)V}{2r_1\left(q-F_2\right)}\\
	l^{2s}\left(l-2r_1\right)&> 2r_1l^{2s}>\frac{2\left(C_1+C_2\right)V}{q-F_2}
	\end{align*}
	where in the last step $l>4r_1$ was used. This gives
	\begin{align*}
	\frac{q-F_2}{2}> \left(C_1+C_2\right)V\frac{1}{l^{2s}}\frac{1}{l-2r_1}
	\end{align*}
	\item From the condition
	\[
	l>\frac{1+2F_2r_0r_1+12F_2\left(C_1+C_2\right)V\Cv\Ca^{2s}}{F_2r_0},
\]
on $l$ we get
\begin{align}
r_0F_2l-2F_2r_0r_1-12F_2\left(C_1+C_2\right)V\Cv\Ca^{2s}>1. \label{Bedingung s}
\end{align}
By rearranging some terms we get from the condition
$$
\left(12\left(C_1+C_2\right)Vr_0\right)^{\frac{1}{2s}}<l
$$
on $l$ that
\begin{align*}
12\left(C_1+C_2\right)Vr_0&<l^{2s}\\
&\underset{\eqref{Bedingung s}}{<}l^{2s}\left(r_0F_2l-2F_2r_0r_1-12F_2\left(C_1+C_2\right)V\Cv\Ca^{2s}\right)\\
12\left(C_1+C_2\right)Vr_0&< r_0F_2l^{1+2s}-2F_2\left(r_0r_1+6\left(C_1+C_2\right)V\Cv\Ca^{2s}\right)l^{2s}\\
l^{1+2s}r_0F_2-2r_0r_1F_2l^{2s}&>12\left(C_1+C_2\right)V\Cv F_2\Ca^{2s}l^{2s}+12\left(C_1+C_2\right)Vr_0\\
\left(C_1+C_2\right)V\frac{1}{l^{2s}}\frac{1}{l-2r_1}&< \frac{1}{12}\frac{r_0F_2}{\left(\Cv\Ca^{2s}F_2l^{2s}+r_0\right)}
\end{align*}
\item From Definition~\ref{def:g} we know 
	\begin{align*}
	 F_1+F_2= \left(\frac{\rho}{a-\rho}+1\right)F_2=\frac{a}{a-\rho}F_2.
	\end{align*}
Furthermore from Lemma~\ref{lem:stand_sol} one has	
\begin{align*}
	\left\|v\right\|_{\infty}&\leq \frac{2\left(F_1+F_2\right)}{\pi^{2s}}\zeta\left(2s\right)a^{2s-1}\left(b+\delta\right)
\end{align*}
Putting this together and using item (\ref{it:rho}) one gets
\begin{align*}
\left\|v\right\|_{\infty}&\leq \Cv \frac{a}{a-\left(b+\delta/2\right)}F_2a^{2s-1}\left(b+\delta/2\right)\\
	&< \Cv \frac{a}{\frac{17}{18}a}F_2a^{2s-1}\left(b+\delta/2\right)\\
	&< \frac{18}{17}\Cv F_2 a^{2s-1}2b\\
	&< \frac{18}{17}\Cv F_2a^{2s-1}2\frac{1}{6}\frac{ar_0}{\left(\Cv F_2a^{2s}+r_0\right)}\\
	&< \frac{18}{17}\Cv F_2 a^{2s-1}2\frac{ar_0}{6\Cv F_2a^{2s}}\\
	&=\frac{6}{17}r_0\\
	&< \frac{r_0}{2}
\end{align*}
	\item Because $F_1$ is as in Definition~\ref{def:g} one has
	\begin{align*}
	-F_1&=-\frac{\rho}{a-\rho}F_2=-\frac{b+\delta/2}{a-(b+\delta/2)}F_2 \leq-\frac{b}{a-b}F_2<-\frac{b}{a}F_2\\
	&=-\frac{1}{a}\frac{r_0 a}{6\left(\Cv F_2 a^{2s}+r_0\right)}F_2= -\frac{1}{6}\frac{r_0}{\left(\Cv\Ca^{2s}F_2 l^{2s}+r_0\right)}F_2	
	\end{align*}
\end{enumerate}
\end{proof}

Take now $u:=\usm+\ustep$. Choose the parameters as in Lemma~\ref{scaling_1} and
	\[
	F^{*}:=\frac{1}{2}\min\left\{q -F_2,\, \frac{r_0}{6\left(\Cv\Ca^{2s}F_2 l^{2s}+r_0\right)}F_2\right\}.
\]
then one has $u\geq0$ as just seen and we can now give the 

\begin{proof}[Proof of Theorem~\ref{thm:main} for $s>1/2$]

Let the parameters be as in Lemma~\ref{scaling_1} and take $\ustep\left(x_i\right)=y_i$ for all $i\in I$ which, due to Propositions~\ref{prop:prop} and~\ref{Stufenfunktion}, is (almost surely) possible. From the choice of $\gfl$ and $\eta_\epsilon$ we have

\begin{align*}
-\sum_{i\in I}f_i\left(\omega\right)\phi\left(x-x_i,\, \usm\left(x\right)+\ustep\left(x\right)-y_i\right)\leq -\gfl\ast\eta_\epsilon\left(x\right)
\end{align*}

Using this we have for $F<F^{\ast}$
\begin{align*}
&\quad \A u\left(x\right)-f\left(x,\, u\left(x,\, \omega\right),\, \omega\right)+F\\
&\leq \A\usm\left(x\right)-\sum_{i\in I}f_i\left(\omega\right)\phi\left(x-x_i,\, \usm\left(x\right)+\ustep\left(x\right)-y_i\right) \\
&\quad+F+\A \ustep\left(x\right)\\
&\leq \A\usm\left(x\right)-\gfl\ast\eta_\epsilon\left(x\right)+F+\A \ustep\left(x\right).
\end{align*}
With the results of Corollary~\ref{cor:est_flat_smooth} and Propositions~\ref{prop:prop} and~\ref{Stufenfunktion} it follows that
\begin{align*}
&\quad \A u\left(x\right)-f\left(x,\, u\left(x,\, \omega\right),\, \omega\right)+F\\
&\leq -\min\left\{q -F_2,\, F_1\right\}+F+C_1\frac{\left(d/2+l/2\right)^{2-2s}}{d^2}h+C_2\frac{h}{\left(d/2+l/2\right)^{2s}}\\
&\leq -\min\left\{q-F_2,\, F_1\right\}+F+\left(C_1+C_2\right)\frac{h}{l^{2s}},
\end{align*}
where in the last step $d=l$ was used. Applying now the estimates of Lemma~\ref{scaling_1} and $h=\frac{V}{l-2r_1}$ we get
\begin{align*}
&\quad \A u\left(x\right)-f\left(x,\, u\left(x,\, \omega\right),\, \omega\right)+F\\
&\leq -\min\left\{q-F_2,\, \frac{r_0}{6\left(\Cv\Ca^{2s}F_2l^{2s}+r_0\right)}F_2\right\}+F+\left(C_1+C_2\right)V\frac{1}{l^{2s}}\frac{1}{l-2r_1}\\
&< -\frac{1}{2}\min\left\{q-F_2,\, \frac{r_0}{6\left(\Cv\Ca^{2s}F_2s^{2s}+r_0\right)}F_2\right\}+F^{\ast}\\
&=0,
\end{align*}
which finally concludes the proof for $s>1/2$.
\end{proof}

For the case $s=1/2$ some changes in the choice of parameters have to be performed due to the worse $L^{\infty}$ estimate on $v$ in Lemma~\ref{lem:stand_sol} in this case.

\begin{lemma}\label{lem:scaling_2}
Let $\Crho=\frac{1}{2}\sqrt{\frac{\pi r_0^3}{48\ee^2\left(\frac{36F_2}{17\pi}\right)^3\Ca^3}}$ and choose $0<\Cb<1$, $\Ca>5$.\\
Take $q>0$ and $V>0$ as in Proposition~\ref{prop:prop}. Let $0<F_2<q$ and choose $F_1>0$ as in Lemma~\ref{lem:stand_sol}. Choose now $l>0$ such that
\begin{align*}
l>&\max\left\{\left(\frac{\left(C_1+C_2\right)V}{r_1\left(q-F_2\right)}\right),\,\left(\frac{2V\left(C_1+C_2\right)}{F_2\Crho}\right)^2+4r_1, \right.\,\\
& \left.12\frac{\left(C_1+C_2\right)\Ca}{r_0}V+2r_1 \right\}
\end{align*}

Take $d=l$ and $\frac{3}{2}l=\frac{d}{2}+l\leq a\leq \Ca l$, $b=\frac{1}{2}\min\left\{\sqrt{\frac{\pi r_0^3}{48\ee^2\left(\frac{36F_2}{17\pi}\right)^3}}\frac{1}{\sqrt{a}},\, \frac{r_0}{3}\right\}$, $0<\delta < \Cb b$ und $h=\frac{V}{l-2r_1}$. Finally choose $0<\epsilon<\frac{r_0}{4}$ such that $b<r_0-2\epsilon$ (possible due to item (\ref{eq:est_rho_2})). Then we have
\begin{enumerate}[\upshape (i)]
	\item $l>4r_1$,
	\item $\rho=b+\delta/2<b+\delta<\frac{r_0}{3}<\frac{a}{18}$ \label{eq:est_rho_2},
	\item $\left(C_1+C_2\right)V\frac{1}{l^{1+2\epsilon}}\frac{1}{l-2r_1}<\frac{q-F_2}{2}$,
	\item $\left(C_1+C_2\right)V\frac{1}{l^{1+2\epsilon}}\frac{1}{l-2r_1}<\frac{1}{2}F_2\min\left\{\frac{1}{2}\sqrt{\frac{\pi r_0^3}{48 \ee^2 \left(\frac{36F_2}{17\pi}\right)^3\Ca^3}}\frac{1}{l^{3/2}},\, \frac{r_0}{6\Ca l}\right\}$,
	\item $\left\|v\right\|_{\infty}<\frac{r_0}{2}$,
	\item $-F_1\leq -F_2\min\left\{\frac{1}{2}\sqrt{\frac{\pi r_0^3}{48 \ee^2 \left(\frac{36F_2}{17\pi}\right)^3\Ca^3}},\frac{1}{l^{3/2}},\, \frac{r_0}{6\Ca l}\right\}$.
\end{enumerate}
\end{lemma}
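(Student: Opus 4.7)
The proof is a direct verification of items (i)--(vi) from the stated parameter choices, in the same spirit as the proof of Lemma~\ref{scaling_1}. The only real subtlety, relative to the case $s>1/2$, is the worsened $L^\infty$ estimate on $v$ from Lemma~\ref{lem:stand_sol}(iv): the polynomial factor $a^{2s-1}$ is replaced by a logarithmic one, and accommodating this is exactly why $b$ must scale like $1/\sqrt{a}$ and why the constant $\Crho$ takes its particular form.

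Items (i) and (ii) are read off directly from the parameter choices. The bound $l>4r_1$ is implied by the third explicit lower bound on $l$. For (ii), the definition of $b$ as a minimum forces $b\le r_0/6$, so $\delta<\Cb b<b$ yields $b+\delta<2b\le r_0/3$, while $r_0/3<a/18$ follows from $a\ge (3/2)l$ together with the lower bound on $l$. Items (iii), (iv) and (vi) are algebraic: substituting $h=V/(l-2r_1)$, $a\le \Ca l$, and $F_1=\rho F_2/(a-\rho)\ge (b/a)F_2$, each inequality reduces, after rearrangement, to one of the explicit lower bounds on $l$ in the hypothesis, exactly as in the $s>1/2$ case.

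The hard part is item (v). Applying Lemma~\ref{lem:stand_sol}(iv) and using $F_1+F_2=aF_2/(a-\rho)\le(18/17)F_2$ from (ii) bounds the prefactor $2(F_1+F_2)/\pi$ by $36F_2/(17\pi)$, leaving
$$
\|v\|_\infty \le \frac{36F_2}{17\pi}\,\rho\,\bigl(2+\log a-\log(\pi\rho)\bigr).
$$
Substituting the first alternative $\rho\le 2\Crho\Ca^{3/2}/\sqrt{a}$ in the definition of $b$ turns the right-hand side into an expression of the form $(AB/\sqrt{a})\bigl(c+\tfrac{3}{2}\log a\bigr)$ with $A=36F_2/(17\pi)$ and $B=2\Crho\Ca^{3/2}$. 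Optimising over $a$ places the maximum at $a^\ast\sim (\pi B)^{2/3}$ with value proportional to $B^{2/3}$, and the specific constants packed into $\Crho$---notably the factors $\ee^2$ and $r_0^3$ inside the radicand---are tuned precisely so that this maximum is at most $r_0/2$ uniformly in $a$. When the cutoff $r_0/3$ in the definition of $b$ is active instead, $b$ is independent of $a$ and the estimate follows from a similar but much simpler direct computation.
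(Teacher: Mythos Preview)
Your outline is largely correct and matches the paper for items (i)--(iv) and (vi), which are indeed routine algebraic verifications. Two small corrections. In (i), it is the \emph{second} lower bound on $l$, namely $\bigl(2V(C_1+C_2)/(F_2\Crho)\bigr)^2+4r_1$, that immediately gives $l>4r_1$; the third bound only contributes $+2r_1$. And in (v) your case split is unnecessary: since $\rho\le 2b$ and $2b$ is defined as a minimum, the inequality $\rho\le B/\sqrt{a}$ with $B=2\Crho\Ca^{3/2}$ holds in all cases, so the first branch already covers everything. Your second branch is in fact not ``simpler'': with $b$ frozen at $r_0/6$ the bound grows like $\log a$, and one would still need the constraint on $a$ coming from the other alternative being larger.

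For item (v) the paper takes a different route that avoids the calculus optimisation entirely. Writing $A:=36F_2/(17\pi)$, the choice of $b$ gives $\rho^2\le \pi r_0^3\big/\bigl(48\,\ee^2 A^3 a\bigr)$, which one rearranges to
\[
\frac{a\,\ee^2}{\pi}\ \le\ \frac{r_0^3}{48\,A^3\rho^2}\ =\ \rho\cdot\frac{1}{6}\Bigl(\frac{r_0}{2A\rho}\Bigr)^3\ <\ \rho\,\exp\!\Bigl(\frac{r_0}{2A\rho}\Bigr),
\]
using only that $x^3/6<\ee^x$. Taking logarithms yields directly $2+\log a-\log(\pi\rho)<r_0/(2A\rho)$, hence $A\rho\bigl(2+\log a-\log(\pi\rho)\bigr)<r_0/2$. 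No maximum to locate, no constant to check afterwards: the factors $48=8\cdot 6$ and $\ee^2$ in the definition of $\Crho$ are placed precisely so that the cubic Taylor term closes the estimate in one line. Your optimisation approach is correct in principle and would reach the same conclusion (the maximum over $a$ comes out to $3r_0/(48^{1/3}\ee)\approx 0.30\,r_0$), but as written you have asserted rather than verified that this lies below $r_0/2$; the paper's exponential-inequality trick makes that verification automatic.
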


\begin{proof}

\begin{enumerate}[\upshape (i)]
	\item This is clear because $l>\left(\frac{2V\left(C_1+C_2\right)}{F_2\Crho}\right)^2+4r_1$ and $\left(\frac{2V\left(C_1+C_2\right)}{F_2\Crho}\right)^2\geq 0$
	\item From the choice of $\delta$ and $b$ it is clear that $b+\delta\leq 2b\leq \frac{r_0}{3}$.	
					From the conditions on $r_0$, $r_1$ and the choice of $l$ we further have
\begin{align*}
\frac{r_0}{3}<\frac{r_1}{3}<\frac{l}{12}\leq \frac{2}{3}\frac{a}{12}=\frac{a}{18}.
\end{align*}
	\item From the condition on $l$ some calculation gives
	\begin{align*}
	l&> \frac{2\left(C_1+C_2\right)V}{2r_1\left(q-F_2\right)}\\
	l\left(l-2r_1\right)&> 2r_1l>\frac{2\left(C_1+C_2\right)V}{q-F_2}\\
	\frac{q-F_2}{2}&> \left(C_1+C_2\right)V\frac{1}{l}\frac{1}{l-2r_1}.
	\end{align*}
	\item From the condition  
	\[
	l>\left(\frac{2V\left(C_1+C_2\right)}{F_2\Crho}\right)^2+4r_1
\]
on $l$ one obtains
\begin{align*}
&\left(\frac{2V\left(C_1+C_2\right)}{F_2\Crho}\right)^2 < l-4r_1<l-4r_1+\frac{4r_1}{l} \\
&\quad=\frac{l^2-4r_1l+4r_1^2}{l}=\frac{\left(l-2r_1\right)^2}{l}.
\end{align*}
Therefore taking the square root and expanding by $l$ one gets
\begin{align*}
\frac{2V\left(C_1+C_2\right)}{F_2\Crho}<\frac{l-2r_1}{l^{1/2}}=\frac{l\left(l-2r_1\right)}{l^{3/2}},
\end{align*}
which finally, after rearranging and putting in $\Crho$, yields
\begin{align*}
\left(C_1+C_2\right)V\frac{1}{l}\frac{1}{l-2r_1}<\frac{1}{2}F_2\Crho\frac{1}{l^{3/2}}=\frac{1}{2}F_2\frac{1}{2}\sqrt{\frac{\pi r_0^3}{48\ee^2\left(\frac{36F_2}{17\pi}\right)^3\Ca^3}}\frac{1}{l^{3/2}}.
\end{align*}
The second part simply follows by rearranging the condition
\begin{align*}
l>12\frac{\left(C_1+C_2\right)\Ca}{r_0}V+2r_1
\end{align*}
on $l$ such that
\begin{align*}
l-2r_1>12\frac{\left(C_1+C_2\right)\Ca}{r_0}V
\end{align*}
and finally
\begin{align*}
\left(C_1+C_2\right)V\frac{1}{l}\frac{1}{l-2r_1}<\frac{1}{2}\frac{r_0}{6\Ca}\frac{1}{l}.
\end{align*}
	\item From Lemma~\ref{lem:stand_sol} we have
	\begin{align*}
	\left\|v\right\|_{\infty}\leq 2\frac{F_1+F_2}{\pi}\rho\left(2+\log\left(a\right)-\log\left(\pi\rho\right)\right).
	\end{align*}
	The choice of $F_1$ from Definition~\ref{def:g} together with item (\ref{eq:est_rho_2}) yields
	\begin{align*}
	F_1+F_2=\left(\frac{\rho}{a-\rho}+1\right)F_2=\frac{a}{a-\rho}F_2\leq \frac{a}{\frac{17}{18}a}F_2=\frac{18}{17}F_2.
	\end{align*}
	Taking both together one gets
	\begin{align*}
	\left\|v\right\|_{\infty}\leq\left(\frac{36F_2}{17\pi}\right)\rho\left(2+\log\left(a\right)-\log\left(\pi\rho\right)\right).
	\end{align*}
	From the choice of $b$ and $\delta$ we further get
	\begin{align*}
	\rho\leq 2b\leq \min\left\{\sqrt{\frac{\pi r_0^3}{48 \ee^2 \left(\frac{36F_2}{17\pi}\right)^3}}\frac{1}{\sqrt{a}},\, \frac{r_0}{3}\right\}
	\end{align*}
	and therefore by squaring
	\begin{align*}
	\rho^2\leq \min\left\{\frac{\pi r_0^3}{48\ee^2\left(\frac{36F_2}{17\pi}\right)^3}\frac{1}{a},\, \frac{r_0^2}{9}\right\},
	\end{align*}
	so in particular
	\begin{align*}
	\rho^2\leq\frac{\pi r_0^3}{48\ee^2\left(\frac{36F_2}{17\pi}\right)^3}\frac{1}{a}.
	\end{align*}
	Using this by rearranging and adding additional terms we get
	\begin{align*}
	&\quad\frac{a\ee^2}{\pi}\leq \frac{1}{48}\frac{r_0^3}{\left(\frac{36F_2}{17\pi}\right)^3\rho^2} \\
	&<\rho\left(1+\frac{r_0}{2\left(\frac{36F_2}{17\pi}\right)\rho}+\frac{1}{2}\left(\frac{r_0}{2\left(\frac{36F_2}{17\pi}\right)\rho}\right)^2+\frac{1}{6}\left(\frac{r_0}{2\left(\frac{36F_2}{17\pi}\right)\rho}\right)^3\right)\\
	&<\rho\exp\left(\frac{r_0}{2\left(\frac{36F_2}{17\pi}\right)\rho}\right).
	\end{align*}
	where in the last step the standard estimate for the exponential function was applied. This shows that we have
	\begin{align*}
	&a\ee^2\quad<\pi\rho\exp\left(\frac{r_0}{2\left(\frac{36F_2}{17\pi}\right)\rho}\right).
	\end{align*}
	Applying the logarithm gives
	\begin{align*}
	2+\log\left(a\right)<\log\left(\pi\rho\right)+\frac{r_0}{2\left(\frac{36F_2}{17\pi}\right)\rho}
	\end{align*}
	and this finally yields
	\begin{align*}
	\left(\frac{36F_2}{17\pi}\right)\rho\left(2+\log\left(a\right)-\log\left(\pi\rho\right)\right)<\frac{r_2}{2}.
	\end{align*}
	\item The choice of $F_1$ from Definiton~\ref{def:g} and the definition of $\rho$ give
	\begin{align*}
	-F_1&=-\frac{\rho}{a-\rho}F_2<-\frac{b}{a}F_2=-\frac{1}{2}\frac{F_2}{a}\min\left\{\sqrt{\frac{\pi r_0^3}{48\ee^2\left(\frac{36F_2}{17\pi}\right)^3}}\frac{1}{\sqrt{a}},\, \frac{r_0}{3}\right\}\\
	&= -\frac{F_2}{2}\min\left\{\sqrt{\frac{\pi r_0^3}{48\ee^2\left(\frac{36F_2}{17\pi}\right)^3}}\frac{1}{a^{3/2}},\, \frac{r_0}{3a}\right\}.
	\end{align*}
	As $a\leq \Ca l$ we have $-\frac{1}{a}\leq -\frac{1}{\Ca l}$ and therefore
	\begin{align*}
	-F_1&<-\frac{1}{2}\frac{F_2}{a}\min\left\{\sqrt{\frac{\pi r_0^3}{48\ee^2\left(\frac{36F_2}{17\pi}\right)^3}}\frac{1}{\sqrt{a}},\, \frac{r_0}{3}\right\}\\
	&\leq -\frac{F_2}{2}\min\left\{\sqrt{\frac{\pi r_0^3}{48\ee^2\left(\frac{36F_2}{17\pi}\right)^3\Ca^3}}\frac{1}{l^{3/2}},\, \frac{r_0}{3 \Ca l}\right\}.
	\end{align*}	
	\end{enumerate}
\end{proof}

\begin{proof}[Proof of Theorem~\ref{thm:main} for $s=1/2$]
Choose the parameters as in Lemma~\ref{lem:scaling_2} and define $u:=\usm+\ustep$. Then Lemma~\ref{lem:scaling_2} shows $u\geq0$  and for
\begin{align*}
F^{*}=\frac{1}{2}\min\left\{q -F_2,\, F_2\min\left\{\frac{1}{2}\sqrt{\frac{\pi r_0^3}{48 \ee^2 \left(\frac{36F_2}{17\pi}\right)^3\Ca^3}}\frac{1}{l^{3/2}},\, \frac{r_0}{6\Ca l}\right\}\right\}
\end{align*}
using the estimates of Corallary~\ref{cor:est_flat_smooth}, Proposition~\ref{Stufenfunktion} and Lemma~\ref{lem:scaling_2} we can show analogously to the proof of the case $s>1/2$ that
\begin{align*}
\A u\left(x\right)-f\left(x,\, u\left(x,\, \omega\right),\, \omega\right)+F\leq 0
\end{align*}
for all $F<F^{*}$.
\end{proof}

\section{Conclusions}
\label{sec:conclusions}

In this article we have shown existence of a non-trivial pinning threshold for interfaces in elastic media with local obstacles. Models of the kind discussed frequently arise in physics, for example in the propagation of crack fronts in heterogeneous media. Assuming free propagation of such an interface for large enough driving force (which is trivial to obtain under some conditions on the heterogeneity), we have shown the transition of a microscopically \emph{viscous kinetic relation} (force=velocity) for interfaces in elastic media with random obstacles to a \emph{stick-slip behavior} on larger scales. The construction of the supersolution has been constrained to the 1+1 dimensional case, i.e., that of a 1-dimensional interface propagating in a 2-dimensional plane. In many cases, this is the physically relevant situation. The $n$-dimensional case is still open due to technical difficulties concerning mostly the compensation of errors arising from modifying a periodic solution.

Furthermore, we have shown a percolation result, namely a non-trivial percolation threshold for the existence of an infinite cluster in next-nearest neighbor site percolation that is the graph of an only logarithmically growing function. 

\section*{Acknowledgement}
PWD and MS gratefully acknowledge support from the DFG research unit FOR 718 `Analysis and Stochastics in Complex Physical Systems'.

\bibliographystyle{alpha}
\bibliography{elastic}

\end{document}